\documentclass[11pt]{article}

\usepackage[hidelinks]{hyperref}
\hypersetup{
  colorlinks   = true, 
  urlcolor     = blue, 
  linkcolor    = blue, 
  citecolor   = red 
}
\usepackage{amsmath,amsthm,amssymb}
\usepackage{graphicx}
\usepackage{bbm}

\newcommand{\T}{\mathbb{T}}

\usepackage{xcolor}
\usepackage[margin=1in]{geometry} 
\usepackage{enumerate,mathtools,mathrsfs,bm,graphicx}
\usepackage[numbers, super]{natbib}
\usepackage[hidelinks]{hyperref}
\newcommand{\N}{\mathbb{N}}									

\newcommand{\R}{\mathbb{R}}

\newcommand{\vertiii}[1]{{\left\vert\kern-0.25ex\left\vert\kern-0.25ex\left\vert #1 
    \right\vert\kern-0.25ex\right\vert\kern-0.25ex\right\vert}}
    
\newcommand{\inner}[2]{\left\langle #1, #2 \right\rangle}
\newcommand{\norm}[1]{\left\Vert #1 \right\Vert}
\newcommand{\abs}[1]{\left\vert #1 \right\vert}

\newtheorem{theorem}{Theorem}[section]

\newtheorem{lemma}[theorem]{Lemma}
\newtheorem{proposition}[theorem]{Proposition}

\newtheorem{definition}[theorem]{Definition}

\begin{document}
	\title{Anisotropic Inviscid Limit for the Navier-Stokes Equations with Transport Noise Between Two Plates}
	\author{Daniel Goodair\footnote{\'{E}cole Polytechnique F\'{e}d\'{e}rale de Lausanne, Lausanne, Switzerland, daniel.goodair@epfl.ch}}
	\date{\today} 
	\maketitle
\setcitestyle{numbers}	
\thispagestyle{empty}
\begin{abstract}
We investigate an anisotropic vanishing viscosity limit of the 3D stochastic Navier-Stokes equations posed between two horizontal plates, with Dirichlet no-slip boundary condition. The turbulent viscosity is split into horizontal and vertical directions, each of which approaches zero at a different rate. The underlying Cylindrical Brownian Motion driving our transport-stretching noise is decomposed into horizontal and vertical components, which are scaled by the square root of the respective directional viscosities. We prove that if the ratio of the vertical to horizontal viscosities approaches zero, then there exists a sequence of weak martingale solutions convergent to the strong solution of the deterministic Euler equation on its lifetime of existence. A particular challenge is that the anisotropic scaling ruins the divergence-free property for the spatial correlation functions of the noise.

\end{abstract}
	
\tableofcontents
\textcolor{white}{Hello}
\thispagestyle{empty}
\newpage

\setcounter{page}{1}

\section{Introduction} \label{section introduction}

This work concerns the inviscid limit of an anisotropic stochastic Navier-Stokes equation, as the horizontal and vertical components of the turbulent viscosity approach zero at different rates. We consider a divergence-free solution $u$ of the equation
\begin{equation} \label{some main equation} 
    du_t = - B(u_t, u_t) dt - \nu_h A_h u_t\, dt - \nu_{z} A_{z} u_t\, dt  - \nu_h^{\frac{1}{2}}\mathcal{P} \mathcal{G}^h u_t \circ d\mathcal{W}_t -  \nu_z^{\frac{1}{2}} \mathcal{P} \mathcal{G}^z u_t \circ d\mathcal{W}_t
\end{equation}
which represents the velocity of a fluid, posed on a domain $\mathscr{O} = \T^2 \times (0,1)$ and supplemented with the no-slip boundary condition $u = 0$ on $\partial \mathscr{O}$. Here $\mathcal{P}$ is the Leray Projector onto divergence-free vector fields with zero normal component on $\partial \mathscr{O}$, $B(u_t,u_t) = \mathcal{P}\left(\left(u_t \cdot \nabla\right)u_t\right)$ is the nonlinear convective term, $A_h = -\mathcal{P}\sum_{j=1}^2 \partial_j^2$ is the horizontal Stokes Operator with turbulent horizontal viscosity $\nu_h$ and $A_z = -\mathcal{P}\partial_3^2$ is the vertical Stokes Operator with turbulent vertical viscosity $\nu_z$. In the Stratonovich stochastic integrals, $\mathcal{W}$ denotes a Cylindrical Brownian Motion acted upon by operators $\mathcal{G}^h$, $\mathcal{G}^z$ in the sense that
$$\mathcal{G}^h u_t \circ d\mathcal{W}_t = \sum_{i=1}^\infty \mathcal{G}^h_i u_t \circ dW^i_t, \qquad \mathcal{G}^z u_t \circ d\mathcal{W}_t = \sum_{i=1}^\infty \mathcal{G}^z_i u_t \circ dW^i_t $$
where $(W^i)$ is a collection of independent standard Brownian Motions comprising $\mathcal{W}$, along with pre-assigned spatial correlation functions $(\xi_i)$ with respect to which $(\mathcal{G}^h_i)$, $(\mathcal{G}^z_i)$ are defined. Each $\xi_i$ is smooth, satisfies $\xi_i \cdot \underline{n} = 0$ on $\partial \mathscr{O}$ where $\underline{n}$ is the outward unit normal vector, but is not assumed to be divergence-free. The operators $\mathcal{G}^h_i$, $\mathcal{G}^z_i$ are defined by
\begin{equation} \label{lead to}\mathcal{G}_i^hu_t = \sum_{j=1}^2\left(\xi_i^j\partial_ju_t + u_t^j\nabla \xi_i^j\right), \qquad \mathcal{G}_i^zu_t = \xi_i^3\partial_3u_t + u_t^3\nabla \xi_i^3 \end{equation}
where the superscript $j$ denotes the $j^{\textnormal{th}}$ component mapping of the vector field. Broadly speaking, our main result is the construction of martingale weak solutions to (\ref{some main equation}) which converge, as $\nu_h$ and $\frac{\nu_z}{\nu_h}$ approach zero, to the strong solution $w$ of the deterministic Euler equation
\begin{equation} \label{Euler}
    \partial_tw = -B(w,w)
\end{equation}
posed on $\mathscr{O}$ with boundary condition $w \cdot \underline{n} = 0$. The convergence is in $L^2_{\omega}L^\infty_tL^2_x$, over the newly constructed probability space and on the lifetime of existence of $w$. The precise statement can be found in Theorem \ref{main result no rotation}.

\subsection{Deterministic Theory}

For a smooth bounded domain in two or three dimensions, whether or not weak solutions of the Navier-Stokes equations with no-slip boundary conditions converge, as the viscosity is taken to zero, to the strong solution of the Euler equation remains one of the outstanding problems of mathematical fluid mechanics. Positive results have been limited to very specific cases regarding analyticity of initial data or structure of the flow [\cite{lopes2008vanishing1}, \cite{lopes2008vanishing}, \cite{sammartino998zero}, \cite{sammartino1998zero}], whilst conditional results such as [\cite{kato1984remarks}, \cite{kelliher2007kato}, \cite{wang2001kato}] characterise the convergence by energy dissipation in a boundary layer which is not known to hold in general. We also mention that the limit is known if one replaces the no-slip boundary condition of the Navier-Stokes equations with a Navier friction condition [\cite{clopeau1998vanishing}, \cite{filho2005inviscid}, \cite{iftimie2006inviscid}, \cite{kelliher2006navier}, \cite{masmoudi2012uniform}].\\

In the absence of noise, the present model was considered in [\cite{masmoudi1998euler}]. Anisotropic viscosity, that is where $\nu_h \neq \nu_z$, is a classical feature of geophysical fluid mechanics. When the fluid is turbulent we speak of the viscosity not as the molecular kinematic viscosity but rather a turbulent viscosity, measured for example by the speed of diffusion of tracers. The Coriolis force amplifies horizontal diffusion, so that $\nu_z$ is much smaller than $\nu_h$. Whilst many aspects of a complete geophysical fluid model are neglected here, the limit as $\frac{\nu_z}{\nu_h}$ approaches zero is a useful setting to expose geophysical fluid phenomena. For a more complete discussion, we refer the reader to [\cite{chemin2006mathematical}, \cite{greenspan1969theory}, \cite{pedlosky2013geophysical}].\\

In the aforementioned [\cite{masmoudi1998euler}], Masmoudi proves precisely the result that we are aiming to establish in stochastic analogy; the convergence of all Leray-Hopf weak solutions of the Navier-Stokes equation with no-slip boundary condition, as $\nu_h$ and $\frac{\nu_z}{\nu_h}$ approach zero, to the strong solution of the Euler equation. We describe the simple heuristics of the problem; as there is no physical boundary in the horizontal direction, then the passage of $\nu_h \rightarrow 0$ is harmless and we are only concerned with the limit $\nu_z \rightarrow 0$ alongside its formation of the boundary layer at the horizontal plates. Following the work of Kato [\cite{kato1984remarks}] in establishing the equivalence between inviscid convergence and energy dissipation in a boundary layer, we anticipate that the problem boils down to showing that
\begin{equation} \label{in mind} \lim_{\nu_z \rightarrow 0} \nu_z \int_0^T\norm{\nabla u_s}^2_{L^2_{\Gamma_{\delta}}}ds = 0 \end{equation}
where $\Gamma_{\delta} = \T^2 \times \left[(0,\delta) \cup (1-\delta, 1)\right]$ is a boundary strip of width $\delta$ shrinking with $\nu_z$. However, more can be said. The fundamental problem in verifying the inviscid limit is the disparity between the boundary conditions, as the tangential component of $u$ is prescribed to be zero at the boundary whilst there are no restrictions on the tangential component of $w$. On the other hand, $u \cdot \underline{n} = 0  =w \cdot \underline{n}$ so the normal components of $u$ and $w$ match at the boundary. This suggests to us that the wild behaviour of $u$ in the boundary layer concerns only the normal derivative in tangential directions; it is therefore very interesting that Kato's criterion was extended by Wang, in [\cite{wang2001kato}] and with Temam in [\cite{temam1997behavior}], to a consideration of only \textit{tangential} derivatives in a boundary layer. This replaces the sufficient condition (\ref{in mind}) with
\begin{equation} \nonumber \lim_{\nu_z \rightarrow 0} \nu_z \int_0^T\sum_{j=1}^2\norm{\partial_j u_s}^2_{L^2_{\Gamma_{\delta}}}ds = 0 \end{equation}
and by rewriting
\begin{equation} \nonumber \nu_z \int_0^T\sum_{j=1}^2\norm{\partial_j u_s}^2_{L^2_{\Gamma_{\delta}}}ds = \left(\frac{\nu_z}{\nu_h}\right) \nu_h \int_0^T\sum_{j=1}^2\norm{\partial_j u_s}^2_{L^2_{\Gamma_{\delta}}}ds \end{equation}
then, as $\nu_h \int_0^T\sum_{j=1}^2\norm{\partial_j u_s}^2_{L^2_{\Gamma_{\delta}}}ds$ is bounded due to the energy inequality of Leray-Hopf solutions, if $\frac{\nu_z}{\nu_h}$ approaches zero the condition holds. Rigorously, the analysis of [\cite{masmoudi1998euler}] relies on the construction of a boundary corrector $\mathscr{B}$ such that $w + \mathscr{B}$ is zero on the boundary, whilst $\mathscr{B}$ is only supported near the boundary and of $L^2_x$ norm vanishing with $\nu_z$. An integration by parts is now facilitated in energy estimates on $u - w - \mathscr{B}$, and the result is achieved through a careful analysis of the many terms involved.

\subsection{Structure of the Noise} \label{subs structure}

There are several considerations to be made when introducing noise into this system. At the first level we ignore any coupling with the viscosity and ask what form the noise should take. We have chosen a transport-stretching noise, following the principle of \textit{Stochastic Advection by Lie Transport} introduced by Holm in [\cite{holm2015variational}]. This yields a Stratonovich integral in the velocity equation of fluid flow, given by 
$$\sum_{i=1}^\infty \mathcal{G}_i u_t \circ dW^i_t, \qquad  \mathcal{G}_iu_t = \sum_{j=1}^3\left(\xi_i^j\partial_ju_t + u_t^j\nabla \xi_i^j\right)$$
where the $(\xi_i)$ are spatial correlation functions of the driving noise as previously discussed. In [\cite{holm2015variational}] the noise is derived through geometric variational principles and is shown to preserve Kelvin's Circulation Theorem. This theory has been expanded upon across [\cite{crisan2022variational}, \cite{holm2021stochastic}, \cite{street2021semi}], and has run in tandem with derivations of transport noise in fluids through a Lagrangian Reynolds Decomposition and Transport Theorem given by M\'{e}min [\cite{memin2014fluid}], which has been further developed in [\cite{chapron2018large}, \cite{debussche2025variational}, \cite{resseguier2017geophysical}]. The theory is bolstered by numerical analysis and data assimilation presented throughout [\cite{chapron2024stochastic}, \cite{cotter2020data},  \cite{crisan2023implementation}, \cite{dufee2022stochastic}, \cite{ephrati2023data}] amongst many others. Stratonovich transport noise has also been derived following a stochastic model reduction scheme in [\cite{debussche2024second}, \cite{flandoli2022additive}, \cite{li2026navier}, \cite{luongo2026averaging}]. All of this recent progress supports the classical ideas of [\cite{brzezniak1992stochastic}, \cite{kraichnan1968small}, \cite{mikulevicius2001equations}, \cite{mikulevicius2004stochastic}], and we suggest [\cite{chapron2023stochastic}, \cite{flandoli2023stochastic}] for a review of the topic.\\

Secondly, we could choose for the limiting Euler equation to be stochastic or to investigate the vanishing noise limit with viscosity. Both regimes are of independent interest, however the former is immediately limited by the available well-posedness theory of the stochastic Euler equation on a bounded domain. Indeed, whilst local well-posedness is known for a Lipschitz multiplicative noise as shown in [\cite{glatt2012local}], or for transport noise on the torus demonstrated in [\cite{crisan2019solution}, \cite{goodair2025closed}], the case of transport noise on a bounded domain remains open. Weak solutions were proven to exist in [\cite{goodair2025navier}], however these lack the regularity required to conduct energy estimates akin to [\cite{masmoudi1998euler}]. Consequently, we only consider the regime of vanishing noise. The next task is to decide on a viscous scaling sending the noise to zero, for which we use $\nu^{\frac{1}{2}}$ having been motivated in [\cite{kuksin2004eulerian}] as the only noise scaling which leads to non-trivial limiting measures (in the limit $t \rightarrow \infty$ and $\nu \rightarrow 0$) for an additive noise in two dimensions in the absence of a boundary. The significance of this scaling for energy balance is further underlined in [\cite{kuksin2005family}, \cite{kuksin2006remarks}, \cite{kuksin2008distribution}] and has been used to study the inviscid limit problem in [\cite{brzezniak2026inviscid}, \cite{butori2024large}, \cite{glatt2015inviscid}, \cite{goodair2025zero}, \cite{luongo2024inviscid}].\\ 

The main novelty and challenge arises due to the third consideration, which is how to introduce the anisotropy into the noise scaling. As far as we are aware, such a problem has not been considered. Our approach is to split the spatial correlation functions $(\xi_i)$ of the driving Cylindrical Brownian Motion into their horizontal and vertical components, scaling by $\nu_h^{\frac{1}{2}}$ and $\nu_z^{\frac{1}{2}}$ respectively. To express this let us fix a generalised notation $\mathcal{G}$ as an operator on vector fields $\phi$, $f$ defined by
\begin{equation} \label{definition of G} \mathcal{G}_{\phi}f = \sum_{j=1}^3\left(\phi^j\partial_jf + f^j\nabla \phi^j\right)\end{equation}
such that $\mathcal{G}_i = \mathcal{G}_{\xi_i}$. The horizontal and vertical components of $\xi_i$ are given by
$$\xi_i^h = \left(\xi_i^1, \xi_i^2, 0\right), \qquad \xi_i^z = \left(0, 0, \xi_i^3 \right) $$
which we scale and combine for the effective spatial correlation function
\begin{equation} \label{effective spatial correlation} \tilde{\xi}_i = \nu_h^{\frac{1}{2}}\xi_i^h + \nu_z^{\frac{1}{2}}\xi_i^z = \left(\nu_h^{\frac{1}{2}}\xi_i^1, \nu_h^{\frac{1}{2}}\xi_i^2, \nu_z^{\frac{1}{2}}\xi_i^3\right).\end{equation}
For simplicity let us define $\tilde{\mathcal{G}}$ by
$\tilde{\mathcal{G}}_i = \mathcal{G}_{\tilde{\xi_i}}$, then our anisotropic stochastic Navier-Stokes equation reads as
\begin{equation} \label{some main equation 2} 
    du_t = - B(u_t, u_t) dt  - \nu_{h} A_{h} u_t\, dt - \nu_z A_z u_t\, dt - \mathcal{P} \tilde{\mathcal{G}} u_t \circ d\mathcal{W}_t. 
\end{equation}
The operator $\tilde{\mathcal{G}}_i$ has the explicit expression
\begin{align*}
  \tilde{\mathcal{G}}_iu_t &= \sum_{j=1}^3\left(\tilde{\xi}_i^j\partial_ju_t + u_t^j\nabla \tilde{\xi}_i^j\right)
  \\&= \nu_{h}^{\frac{1}{2}}\sum_{j=1}^2\left(\xi_i^j\partial_ju_t + u_t^j\nabla \xi_i^j\right) + \nu_{z}^{\frac{1}{2}}\left(\xi_i^3\partial_3u_t + u_t^3\nabla \xi_i^3\right)\\
  &= \nu_{h}^{\frac{1}{2}}\mathcal{G}_i^hu_t + \nu_{z}^{\frac{1}{2}}\mathcal{G}_i^zu_t
\end{align*}
for $\mathcal{G}_i^h$, $\mathcal{G}_i^z$ as defined in (\ref{lead to}). Thus, we arrive at equation (\ref{some main equation}). We remark that $\mathcal{G}_i^h = \mathcal{G}_{\xi_i^h}$ and $\mathcal{G}_i^z = \mathcal{G}_{\xi_i^z}$, hence the horizontal and vertical superscripting on $\mathcal{G}_i^h$ and $\mathcal{G}_i^z$ refers to transport and stretching along the horizontal and vertical components, respectively, of the spatial correlation functions. In addition, whilst we have chosen to incorporate the noise splitting into the operator $\mathcal{G}$, one could equivalently do this at the level of $\mathcal{W}$. Indeed if one considers
$$\mathcal{W} = \sum_{i=1}^\infty \xi_iW^i$$
 and decomposes into its horizontal and vertical components $\mathcal{W} = \mathcal{W}^h + \mathcal{W}^z$ where
\begin{equation} \label{franco equivalent}
    \mathcal{W}^h = \sum_{i=1}^\infty \xi_i^hW^i, \qquad \mathcal{W}^z = \sum_{i=1}^\infty \xi_i^zW^i,
\end{equation}
then one has the equivalent representation
$$\nu_h^{\frac{1}{2}}\mathcal{G}^h u_t \circ d\mathcal{W}_t +  \nu_z^{\frac{1}{2}} \mathcal{G}^z u_t \circ d\mathcal{W}_t = \nu_h^{\frac{1}{2}}\mathcal{G} u_t \circ d\mathcal{W}^h_t + \nu_z^{\frac{1}{2}} \mathcal{G} u_t \circ d\mathcal{W}^z_t.$$
The splitting (\ref{franco equivalent}) also appeared in [\cite{flandoli2024boussinesq}] for a Taylor-Proudman model with transport-stretching noise, a consequence of the 2D-3C nature of the equation.

\subsection{Aspects of the Proof} \label{subs aspects}

To motivate a discussion on the main elements of the proof, let us mention some existing results on inviscid limits for the stochastic Navier-Stokes equations. As a reminder, the precise statement of our result is given in Theorem \ref{main result no rotation}. Stochastic versions of Kato's Criterion have been established for a vanishing additive noise in 2D [\cite{luongo2024inviscid} ], vanishing transport type noise in 3D [\cite{goodair2025zero}], and non-vanishing additive noise in 2D [\cite{wang2024kato}]. For Navier boundary conditions in 2D, the limit for non-vanishing additive noise [\cite{cipriano2015inviscid}] and transport-stretching noise [\cite{goodair2025navier}] has been proven. Results on invariant measures in 2D for an additive noise without physical boundary are given in [\cite{bessaih2013inviscid}, \cite{brzezniak2026inviscid}, \cite{glatt2015inviscid}]. All of these results are for the isotropic case; with anisotropy we are only aware of one result, given in [\cite{wang2024zero}], dealing with a non-vanishing horizontal viscosity and a vertical viscosity vanishing as the rotation speed of an additional Coriolis term is taken to infinity. The authors consider an additive noise acting only in the two horizontal directions, and the limit is a damped 2D stochastic Navier-Stokes equation. Therefore, the structure of the noise in (\ref{some main equation}) and its inviscid limit is fundamentally new.\\

The first step in treating (\ref{some main equation}) is to convert the Stratonovich equation to It\^{o} form, where the analysis is much more favourable. One can see that the cross-variation will involve cross-terms between the horizontal and vertical viscosities. To make the conversion we use the compact expression (\ref{some main equation 2}), which by following [\cite{goodair2025stratonovich}] yields the equation 
\begin{equation} \label{some main equation 2 Ito} 
    du_t = - B(u_t, u_t)\ dt  - \nu_{h} A_{h} u_t\, dt - \nu_z A_z u_t\, dt +\frac{1}{2} \sum_{i=1}^\infty\mathcal{P}\tilde{\mathcal{G}}_i^2u_tdt -  \mathcal{P} \tilde{\mathcal{G}} u_t d\mathcal{W}_t 
\end{equation}
up to a `cost of a derivative'. Note that the It\^{o}-Stratonovich corrector has the form $(\mathcal{P}\tilde{\mathcal{G}}_i)^2u_t$ a priori, however we use the property that $\mathcal{P}\tilde{\mathcal{G}}_i = \mathcal{P}\tilde{\mathcal{G}}_i\mathcal{P}$ shown in [\cite{goodair20223d}] Lemma 2.7 to rewrite it in the form of (\ref{some main equation 2 Ito}). We stress that this property is a not consequence of $\tilde{\xi}_i$ being divergence-free, as indeed it is not here. In full, equation (\ref{some main equation 2 Ito}) reads as
\begin{align}
    \nonumber du_t = &- B(u_t, u_t) dt - \nu_h A_h u_t\, dt - \nu_{z} A_{z} u_t\, dt  - \nu_h^{\frac{1}{2}} \mathcal{P} \mathcal{G}^h u_t  d\mathcal{W}_t -  \nu_z^{\frac{1}{2}} \mathcal{P} \mathcal{G}^z u_t  d\mathcal{W}_t\\
    &+ \frac{\nu_h}{2}\sum_{i=1}^\infty \mathcal{P}\mathcal{G}^h_i\mathcal{G}^h_iu_t dt + \frac{\nu_h^{\frac{1}{2}}\nu_z^{\frac{1}{2}}}{2}\sum_{i=1}^\infty \mathcal{P}\left( \mathcal{G}^h_i\mathcal{G}^z_iu_t + \mathcal{G}^z_i\mathcal{G}^h_iu_t \right) dt + \frac{\nu_z}{2}\sum_{i=1}^\infty \mathcal{P}\mathcal{G}^z_i\mathcal{G}^z_iu_t dt. \label{expanded ito form}
\end{align}
Even the existence of martingale weak solutions to (\ref{expanded ito form}) is unclear. Typically, the key step in showing such existence for transport noise models is using the divergence-free property of $\xi_i$ to obtain that for vector fields $f,g$ and the inner product in $L^2_x$,
\begin{equation} \label{to be replaced}
    \inner{(\xi_i \cdot \nabla)f}{g} = -\inner{f}{(\xi_i \cdot \nabla) g}
\end{equation}
hence in energy estimates, as one meets the term combining the It\^{o}-Stratonvoich corrector and quadratic variation,
\begin{equation} \label{to be replaced 2}\inner{(\xi_i \cdot \nabla)(\xi_i \cdot \nabla)u}{u} + \norm{(\xi_i \cdot \nabla)u}^2 = -\inner{(\xi_i \cdot \nabla)u}{(\xi_i \cdot \nabla)u} + \norm{(\xi_i \cdot \nabla)u}^2 = 0.
\end{equation}
With the additional stretching term one has to work a bit more in controlling commutators, but eventually a bound by $\norm{u}^2$ is achieved primarily due to the same fact. Without the divergence-free property we cannot come to the same conclusion; whilst we could very happily assume that $\xi_i$ is divergence-free, this is ruined by the viscous scaling in the effective spatial correlation function $\tilde{\xi}_i$ defined in (\ref{effective spatial correlation}). To circumvent the issue one could recognise that it is sufficient to have the horizontal and vertical components of $\xi_i$ divergence-free, that is $\partial_1\xi_i^1 + \partial_2\xi_i^2 = 0 = \partial_3\xi_i^3.$ However $\xi_i$ must also satisfy the impermeability condition $\xi_i \cdot \underline{n} = 0$ on $\partial{\mathscr{O}}$, which simply says that $\xi_i^3 = 0$ on $\partial \mathscr{O} = \T^2 \times \left(\{0\} \cup \{1\} \right)$. Combining with the assumption $\partial_3\xi_i^3 = 0$ implies that $\xi_i^3 = 0$ everywhere in $\mathscr{O}$, hence the underlying Cylindrical Brownian Motion would only act in two dimensions so the phenomenon that we are investigating is trivialised.\\

The analysis of transport noise where the spatial correlation functions are not divergence-free appears completely absent in the literature. Even in the cases where transport noise is introduced into a compressible fluid [\cite{breit2022compressible}, \cite{crisan2024well}], the divergence-free assumption still appears to facilitate the analysis. We also mention that for the 2D-3C model considered in [\cite{flandoli2024boussinesq}] the authors assume that $\xi_i$ is divergence-free, which also implies that $\partial_1\xi_i^1 + \partial_2\xi_i^2 = 0 = \partial_3\xi_i^3$ as $\xi_i$ is independent of the third variable, however as their domain is the torus then $\xi_i^3$ does not need to be trivial.\\

We progress our analysis by recognising that in place of (\ref{to be replaced}), we have 
$$ \inner{(\tilde{\xi}_i \cdot \nabla)f}{g} = -\inner{f}{(\tilde{\xi}_i \cdot \nabla) g} -\inner{f}{\left(\sum_{j=1}^3\partial_j\tilde{\xi}_i^j\right)g} $$
and therefore, revisiting (\ref{to be replaced 2}),
\begin{align*}
    \inner{(\tilde{\xi}_i \cdot \nabla)(\tilde{\xi}_i \cdot \nabla)u}{u} &+ \norm{(\tilde{\xi}_i \cdot \nabla)u}^2\\  &= -\inner{(\tilde{\xi}_i \cdot \nabla)u}{\left(\sum_{j=1}^3\partial_j\tilde{\xi}_i^j\right)u}\\
    &\leq \left(\norm{\tilde{\xi}_i^h}_{L^\infty}\sum_{j=1}^2\norm{\partial_ju} + \norm{\tilde{\xi}_i^z}_{L^\infty}\norm{\partial_3u} \right)\norm{\tilde{\xi}_i}_{W^{1,\infty}}\norm{u}\\
    &\leq \left(\nu_h^{\frac{1}{2}}\norm{\xi_i^h}_{L^\infty}\sum_{j=1}^2\norm{\partial_ju} + \nu_z^{\frac{1}{2}}\norm{\xi_i^z}_{L^\infty}\norm{\partial_3u} \right)(\nu_h^{\frac{1}{2}} + \nu_z^{\frac{1}{2}})\norm{\xi_i}_{W^{1,\infty}}\norm{u}\\
    &\leq c_{\delta}(\nu_h + \nu_z)\norm{\xi_i}_{W^{1,\infty}}^2\norm{u}^2 + \delta\nu_h\norm{\xi_i^h}_{L^\infty}^2\sum_{j=1}^2\norm{\partial_ju}^2 + \delta\nu_z\norm{\xi_i^z}_{L^\infty}^2\norm{\partial_3u}^2
\end{align*}
by Young's Inequality with any small parameter $0 < \delta$. In particular $\delta$ can be chosen sufficiently small such that the latter two terms can be hidden in the viscous smoothing, and with $\delta$ fixed the first term will vanish with $\nu_h$ and $\nu_z$. Whilst this illustrates an important point, in the proof of our main result one must be much more precise in dealing with various cross-terms of the two directions.\\

An expected consequence of the lack of uniqueness for weak solutions of the 3D Navier-Stokes equations is that our solutions to (\ref{some main equation 2 Ito}) are only probabilistically weak, meaning that for every $\nu_h$ and $\nu_z$ we can find a probability space and Cylindrical Brownian Motion supporting a solution, however a priori the choice of that space may well depend on $\nu_h$ and $\nu_z$ themselves. To consider the expectation of the difference of the solutions we will need a single probability space supporting all solutions, which we construct by first fixing a sequence $(\nu_h^k, \nu_z^k)$ and then taking the infinite product of the probability spaces supporting the corresponding solutions $u^k$. Our main result is therefore stated for sequences of viscosities. Note that we do not obtain or need a uniform Cylindrical Brownian Motion. We proceed by considering $\mathbbm{E}\left(\sup_{t \in [0,T]}\norm{u^k_t - w_t - \mathscr{B}_t}_{L^2_x}^2 \right)$ where $\mathscr{B}$ is the boundary corrector from [\cite{masmoudi1998euler}]. To work with the evolution equation for $u^k$ we must look at the level of its Galerkin approximation and pass to the limit, given that the nonlinear term does not belong to $L^2_tH^{-1}_x$. Whilst the same is necessary in [\cite{masmoudi1998euler}] it is only mentioned as a formality, however this does not appear trivial. Strong solutions of the Euler equation $w$ are required as $H^{\gamma}$ bounds are used for it with $\frac{5}{2} < \gamma$, but if one passes the Galerkin projections $\mathcal{P}_n$ onto $w$ and tries to conclude by using $H^{\gamma}$ bounds on $\mathcal{P}_nw$ then this will fail as $\norm{\mathcal{P}_nw}_{H^{\gamma}}$ explodes with $n$ if $w \neq 0$ on $\partial \mathscr{O}$ (see [\cite{goodair2026high}] Appendix D). We must remove the projections before estimating term by term, generating quantities of the form $\norm{(I - \mathcal{P}_n)(w + \mathscr{B})}_{H^1_x}$ with some careful bounds, which do converge to zero as $w + \mathscr{B}$ satisfies the Dirichlet boundary condition.\\

We close this section with a brief comment on the necessity of features of the noise scaling for our method. Indeed if we were to scale  by $\nu^{\alpha}$ for some $0 < \alpha < \frac{1}{2}$, or to avoid decomposing the noise and suppose that the vertical component only decays with $\nu_h^{\frac{1}{2}}$, then our arguments would fail. This is a consequence of the $\nu_h^{\frac{1}{2}}, \nu_z^{\frac{1}{2}}$ scaling for the first order transport term matching the $\nu_h,\nu_z$ scaling of the Laplacian.

\section{Preliminaries}

\subsection{Functional Analytic Preliminaries} \label{subs funct anal}

We recall that $\mathscr{O} = \T^2 \times (0,1)$, and denote the usual Sobolev Spaces $W^{s,p}(\mathscr{O};\R^3)$, $H^{\gamma}(\mathscr{O};\R^3)$ by simply $W^{s,p}$, $H^{\gamma}$. We shall use $\inner{\cdot}{\cdot}$ to represent the $L^2$ inner product and similarly for the norm, whilst also employing subscripts $L^p_h$, $L^p_z$ as shorthand for $L^p\left(\T^2;\R^3\right)$ and $L^p\left((0,1);\R^3\right)$ respectively. This shorthand will also apply for general Euclidean target spaces, which shall be clear from the context. Let $C^{\infty}_{0,\sigma}$ be the space of smooth, compactly supported, divergence-free functions from $\mathscr{O}$ into $\R^3$. Then we define $L^2_{\sigma}$, $W^{1,2}_{\sigma}$ as the completion of $C^{\infty}_{0,\sigma}$ in $L^2$ and $W^{1,2}$ respectively; $W^{1,2}_{\sigma}$ is precisely the subspace of $W^{1,2}$ consisting of divergence-free and zero-trace functions, whilst $H^{\gamma} \cap L^2_{\sigma}$ for $1 \leq \gamma$ is the subspace of $H^{\gamma}$ of divergence-free functions $f$ satisfying $f \cdot \underline{n} = 0$ on $\partial \mathscr{O}$, where $\underline{n}$ is the outward unit normal vector at $\partial \mathscr{O}$. The geometry of the domain means that $f \cdot \underline{n} = 0$ is equivalent to $f^3 = 0$ on $\partial \mathscr{O}$. We recommend [\cite{temam2001navier}] for a proof of these facts.\\

Henceforth we fix a deterministic initial condition $u_0 \in H^{\gamma} \cap L^2_{\sigma}$ for some $\frac{5}{2} < \gamma$, as the classical regime for existence and uniqueness of local strong solutions of the Euler equation. Namely, referring to [\cite{bourguignon1974remarks}] Theorem 1 for example, there exists some $0 < T < \infty$ and a unique $w \in C\left([0,T]; H^{\gamma} \cap L^2_{\sigma} \right) \cap C^1\left([0,T] \times \bar{\mathscr{O}}; \R^3 \right)$ such that the identity \begin{equation} \label{euler identity} w_t = u_0 - \int_0^tB(w_s,w_s)ds \end{equation} holds for all $0 \leq t \leq T$ in $L^2_{\sigma}$. We recall the boundary corrector function constructed in [\cite{masmoudi1998euler}] Subsection 2.1.

\begin{lemma} \label{masmoudi corrector}
Let $0 < \theta$ be an arbitrary parameter and fix $0 < \nu_z$. There exists a function $\mathscr{B} \in C\left([0,T]; H^{\gamma} \cap L^2_{\sigma} \right)$ such that $w_t + B_t \in W^{1,2}_{\sigma}$ for every $t \in [0,T]$, of the form
$$\mathscr{B}_t(x,y,z) = \mathscr{M}(z)\begin{pmatrix}
       w^1_t(x,y,0) + w^1_t(x,y,1)\\
       w^2_t(x,y,0) + w^2_t(x,y,1)\\
       \partial_3w^3_t(x,y,0) + \partial_3w^3_t(x,y,1)
   \end{pmatrix} \coloneqq \mathscr{M}(z)\mathscr{A}_t(x,y)$$
where $\mathscr{M}(z) \in \R^{3 \times 3}$ and $\mathscr{M}$ is only supported on $\left[0, \frac{1}{4}\right] \cup \left[\frac{3}{4}, 1\right]$. Moreover the matrix valued function $\mathscr{M}$ is composed of just an upper left matrix $\mathscr{M}^h \in \R^{2 \times 2}$ and a lower right scalar $\mathscr{M}^z \in \R$, satisfying the bounds
\begin{align*}
    \norm{\mathscr{M}}_{L^2_z} + \norm{z\partial_3\mathscr{M}(z)}_{L^2_{\left(0,\frac{1}{4}\right)}} + \norm{(1-z)\partial_3\mathscr{M}(z)}_{L^2_{\left(\frac{3}{4}, 1\right)}} &\leq c\left(\theta \nu \right)^{\frac{1}{4}}\\
    \norm{\mathscr{M}^z}_{L^\infty_z} + \norm{z^2\partial_3\mathscr{M}(z)}_{L^\infty_{\left(0,\frac{1}{4}\right)}} + \norm{(1-z)^2\partial_3\mathscr{M}(z)}_{L^\infty_{\left(\frac{3}{4},1\right)}}
     &\leq c\left(\theta \nu \right)^{\frac{1}{2}}\\
    \norm{\partial_3\mathscr{M}}_{L^2_z} &\leq c\left(\theta \nu \right)^{-\frac{1}{4}}\\
    \norm{\mathscr{M}}_{L^\infty_z} + \norm{\partial_3\mathscr{M}^z}_{L^\infty_z} &\leq c
\end{align*}
for a constant $c$ independent of $\theta$ and $\nu_z$.



\end{lemma}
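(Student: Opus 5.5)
The plan is to construct $\mathscr{M}$ explicitly as a cut-off profile in the vertical variable, following the Kato/Masmoudi strategy, and then read off every bound by direct one-dimensional computation. I interpret $\nu$ in the bounds as $\nu_z$. Set $\epsilon = (\theta\nu_z)^{1/2}$ for the boundary layer width. Fix a smooth cut-off $\rho:[0,\infty)\to[0,1]$ with $\rho = 1$ on $[0,1]$ and $\rho = 0$ on $[2,\infty)$. Near $z=0$ I would use the profile $\varphi(z) = \rho(z/\epsilon)$, and near $z=1$ the reflected profile $\varphi(1-z)$; for $\epsilon$ small both are supported in $[0,\frac14]\cup[\frac34,1]$.

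The horizontal block is taken as $\mathscr{M}^h(z) = -\varphi(z)I_2$ near $0$ and $-\varphi(1-z)I_2$ near $1$, each acting on the corresponding trace. The lower right scalar must restore the divergence-free condition. Write $\mathscr{B}^h = -\varphi(z)w^h(x,y,0)$. Then $\partial_3\mathscr{B}^3 = \varphi(z)\,\mathrm{div}_h w^h(x,y,0) = -\varphi(z)\,\partial_3 w^3(x,y,0)$, using $\mathrm{div}\,w=0$. Hence
$$\mathscr{B}^3 = -\Big(\int_0^z\varphi\Big)\partial_3w^3(x,y,0), \qquad \mathscr{M}^z(z) = -\int_0^z\varphi(s)\,ds .$$
This gives $\mathscr{B}^3(z=0)=0$ automatically. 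For $\mathscr{B}^3$ to vanish for $z \geq 2\epsilon$ (so that the support stays near the boundary and $\mathscr{B}^3=0$ at $z=1$), I would modify the profile: take $\varphi = \rho(z/\epsilon) + \epsilon\,\psi$-type corrections, or more simply choose $\varphi(z) = \frac{d}{dz}\big(z\,\rho(z/\epsilon)\big)$. Then $\mathscr{M}^z(z) = -z\rho(z/\epsilon)$, and $\mathscr{M}^h = -\varphi$ still equals $-1$ at $z=0$. This is the key design choice.

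Since $w^3=0$ at $z=0$ and $w^3(x,y,0)$ is not used elsewhere, we obtain
\begin{itemize}
\item $w+\mathscr{B}=0$ at $z=0$ and $z=1$;
\item $\mathrm{div}\,\mathscr{B}=0$, hence $w_t+\mathscr{B}_t\in W^{1,2}_\sigma$.
\end{itemize}
The regularity $\mathscr{B}\in C([0,T];H^\gamma\cap L^2_\sigma)$ follows from trace theorems applied to $w\in C([0,T];H^\gamma)$. Because $\gamma>5/2$, the traces of $w^h$ and $\partial_3w^3$ lie in $H^{\gamma-3/2}(\T^2)$ with $\gamma - 3/2 > 1$. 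At this point I need to check that the horizontal regularity of $\mathscr{A}$ suffices for the $H^\gamma$ claim. Since $\mathscr{A}$ only has regularity $H^{\gamma-3/2}$ in $(x,y)$, the claim may need to be read as a weaker space, possibly with $\gamma$ reduced, and this is a potential gap I would flag. The sum-of-traces form in the statement arises by packaging the $0$ and $1$ pieces into a single $\mathscr{M}$, using that their supports are disjoint.

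The bounds are then scaling computations with $\varphi$ of size $O(1)$ on a set of width $\sim\epsilon$:
\begin{itemize}
\item $\|\mathscr{M}\|_{L^2_z}\sim\epsilon^{1/2}=(\theta\nu)^{1/4}$;
\item $z\partial_3\mathscr{M}\sim z\epsilon^{-1}$ on $[0,2\epsilon]$, giving an $L^2$ norm $\sim\epsilon^{1/2}$;
\item $\|\mathscr{M}^z\|_{L^\infty}\le 2\epsilon$, and $z^2\partial_3\mathscr{M} = O(\epsilon)$ in $L^\infty$;
\item $\|\partial_3\mathscr{M}\|_{L^2}\sim\epsilon^{-1/2}$;
\item $\|\mathscr{M}\|_{L^\infty}$ and $\|\partial_3\mathscr{M}^z\|_{L^\infty}=\|\varphi\|_{L^\infty}$ are $O(1)$.
\end{itemize}
All of these hold with constants depending only on $\rho$. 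The main obstacle is the compatibility between the three requirements: the divergence-free constraint, the vanishing of $\mathscr{B}^3$ at both plates, and compact support. The choice $\mathscr{M}^z = -z\rho(z/\epsilon)$ resolves this.
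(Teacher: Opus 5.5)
Your construction is the same kind of corrector that the paper imports from Masmoudi (Subsection 2.1 there). It uses a horizontal profile equal to $-I_2$ at the plate on a layer of width $\epsilon=(\theta\nu_z)^{1/2}$, fixes the vertical component by incompressibility through $\mathrm{div}_h w^h=-\partial_3w^3$ on the plate, and sums one copy per plate. Your explicit choice $\mathscr{M}^z=-z\rho(z/\epsilon)$, $\mathscr{M}^h=-\frac{d}{dz}\left(z\rho(z/\epsilon)\right)I_2$ works, and all four lines of bounds follow from the scaling computations you list.

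Several of your side remarks need correcting.

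\textbf{The $H^\gamma$ regularity.} Your concern is justified, and it cannot be repaired within the stated form. We have $\mathscr{B}^3=\mathscr{M}^z(z)\,\partial_3w^3(x,y,0)$ with $\mathscr{M}^z\not\equiv0$. The trace $\partial_3w^3|_{z=0}$ lies in general only in $H^{\gamma-3/2}(\T^2)$, and since $H^\gamma(\mathscr{O})\subset L^2_zH^\gamma_h$, such a product is not in $H^\gamma$. This is an overstatement in the lemma rather than a gap in your argument. What the construction does give is $\mathscr{B}\in C([0,T];H^{\gamma-3/2}\cap L^2_\sigma)\subset C([0,T];W^{1,2})$ with $w_t+\mathscr{B}_t\in W^{1,2}_\sigma$. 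Together with the bounds on $\mathscr{A}$, that is all the main proof uses.

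\textbf{The ``sum of traces'' form.} Your justification is wrong. Disjoint $z$-supports do not let a single $3\times3$ profile act on $\mathscr{A}^{(0)}+\mathscr{A}^{(1)}$: near $z=0$ it would also subtract $w^h(x,y,1)$ and destroy the boundary condition there. The correct object is $\mathscr{M}_0(z)\mathscr{A}^{(0)}+\mathscr{M}_1(z)\mathscr{A}^{(1)}$, which is what the paper means by ``the sum of these correctors''. The later estimates are insensitive to this distinction.

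\textbf{The profile near $z=1$.} The vertical profile there is $+(1-z)\rho((1-z)/\epsilon)$, not the literal reflection, because $\partial_3$ changes sign under $z\mapsto1-z$.

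\textbf{Smallness of $\epsilon$.} Your restriction to small $\epsilon$ is necessary, not a weakness. Any profile with $\mathscr{M}^h(0)=-I_2$ supported in $[0,1/4]$ has $\norm{\partial_3\mathscr{M}^h}_{L^2_z}\geq2$. Hence the bound $\norm{\partial_3\mathscr{M}}_{L^2_z}\leq c(\theta\nu_z)^{-1/4}$ fails for large $\theta\nu_z$. This is harmless, because the paper takes $\theta\nu_z=\tilde{c}\nu_h\nu_z\to0$.
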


We remark that the boundary corrector from [\cite{masmoudi1998euler}] is constructed for the domain $\T^2 \times [0,\infty)$ so only one horizontal plate is considered. On $\mathscr{O}$ we first build the Masmoudi corrector near the bottom horizontal plate $\T^2 \times \{0\}$ with support in $\T^2 \times [0,\frac{1}{4}]$, and then identically establish the corrector near  $\T^2 \times \{1\}$ with support in $\T^2 \times [\frac{3}{4},1]$ by replacing $z$ with $1-z$. Our $\mathscr{B}$ is then the sum of these correctors. We also establish bounds on $\mathscr{A}$ below.


\begin{lemma} \label{bounds on A}
The function $\mathscr{A}$ from Lemma \ref{masmoudi corrector} satisfies the bounds
\begin{align*}
   \norm{\mathscr{A}_s}_{L^\infty_h} + \sum_{j=1}^2\norm{\partial_j\mathscr{A}_s}_{L^2_h} + \norm{\partial_t\mathscr{A}_s}_{L^2_h} \leq c\norm{w_s}_{H^{\gamma}}.
\end{align*}
for some constant $c$ independent of $w$.

\end{lemma}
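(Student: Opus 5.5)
The plan is to read every component of $\mathscr{A}_s$ as a trace on $\T^2 \times \{0\}$ or $\T^2 \times \{1\}$ of $w_s$ or one of its derivatives. Each term is then controlled by the trace theorem on $\mathscr{O}$ followed by Sobolev embedding on $\T^2$, using $\frac{5}{2} < \gamma$. By symmetry under $z \mapsto 1-z$ it suffices to treat the plate $z=0$. I use the trace map $H^{r}(\mathscr{O}) \to H^{r-\frac12}(\T^2)$ for $\frac12 < r$, with norm independent of the function.

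\textbf{Spatial terms (linear, constant independent of $w$).}
\begin{itemize}
\item For $j=1,2$, the trace of $w^j_s$ lies in $H^{\gamma-\frac12}(\T^2)$. Since $\gamma - \frac12 > 2 > 1$, this embeds into $L^\infty_h$ in two dimensions, with norm at most $c\norm{w_s}_{H^\gamma}$.
\item $\partial_3 w^3_s \in H^{\gamma-1}$, so its trace lies in $H^{\gamma-\frac32}(\T^2)$. Since $\gamma-\frac32>1$, this again embeds into $L^\infty_h$.
\item Horizontal derivatives commute with the trace, because $\partial_j$ is tangential to the plates. Hence $\partial_j \mathscr{A}^i_s$ is the trace of $\partial_j w^i_s \in H^{\gamma-1}$ for $i=1,2$, and of $\partial_j\partial_3 w^3_s \in H^{\gamma-2}$ for $i=3$.
\item Since $\gamma - 2 > \frac12$, both traces exist in $L^2_h$ with bound $c\norm{w_s}_{H^\gamma}$.
\end{itemize}
This gives the first two terms of the lemma with a constant depending only on $\gamma$ and the domain.

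\textbf{Time derivative.} By (\ref{euler identity}) and $w \in C^1([0,T]\times\bar{\mathscr{O}};\R^3)$, $\partial_t w_s = -B(w_s,w_s)$, and $\partial_t$ commutes with the trace. So I need the traces of $B^j(w_s,w_s)$ for $j=1,2$ and of $\partial_3 B^3(w_s,w_s)$ in $L^2_h$.
\begin{itemize}
\item By the trace theorem it suffices to bound $\norm{B(w_s,w_s)}_{H^{\gamma-1}}$, because $\gamma-1-\frac32 > 0$.
\item The Leray projector is bounded on $H^{\gamma-1}$ for this flat domain. Applying the Moser/Kato--Ponce product estimate to $(w\cdot\nabla)w$ then gives
$$\norm{B(w_s,w_s)}_{H^{\gamma-1}} \leq c\norm{w_s}_{W^{1,\infty}}\norm{w_s}_{H^\gamma}.$$
\end{itemize}
This is the step I expect to be the real obstacle. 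The quantity is intrinsically quadratic in $w$, so a bound linear in $\norm{w_s}_{H^\gamma}$ only holds once the $W^{1,\infty}$ factor is absorbed into the constant.

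I would handle that factor as follows. The Euler solution is fixed once and for all by $u_0$ and $T$, and it lies in $C^1([0,T]\times\bar{\mathscr{O}};\R^3)$. Hence $\sup_{t\in[0,T]}\norm{w_t}_{W^{1,\infty}}$ is a fixed finite number determined by the data and uniform in $s$. Taking it into $c$ yields $\norm{\partial_t\mathscr{A}_s}_{L^2_h}\le c\norm{w_s}_{H^\gamma}$. In this reading, ``independent of $w$'' means that $c$ does not depend on $s$ or on any viscosity parameter; it is a fixed constant attached to the given strong Euler solution. For the time-derivative term I do not see how to obtain a constant independent of $w$ in any stronger sense, since $\partial_t w$ is quadratic in $w$.

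If the stated linear bound must hold with $c$ depending only on $\gamma$ and the domain, this route does not give it, and I would flag that point rather than claim it.
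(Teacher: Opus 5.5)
Your proposal is correct and follows essentially the paper's route: the trace theorem plus Sobolev embedding handle the spatial terms, and for $\partial_t\mathscr{A}$ you use the Euler equation together with the trace theorem from $H^{\gamma-1}$ and a product estimate (the paper uses that $H^{\gamma-1}$ is an algebra where you use a Moser/Kato--Ponce bound). Your caveat is well-founded and applies equally to the paper's proof, which also only gives $\norm{\partial_t\mathscr{A}_s}_{L^2_h}\leq c\norm{w_s}_{H^{\gamma}}^2$; a linear bound with a universal constant is ruled out by the Euler scaling $w_t\mapsto\lambda w_{\lambda t}$, so $c$ must be allowed to depend on $\sup_{t\in[0,T]}\norm{w_t}_{H^{\gamma}}$, which is exactly how the lemma is used in the main proof.
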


\begin{proof}
Control on the first term follows simply from the fact that
$$\norm{\mathscr{A}_s}_{L^\infty_h} \leq c\norm{w_s}_{W^{1,\infty}} \leq c\norm{w_s}_{H^{\gamma}}$$
by the usual Sobolev Embedding $H^{\gamma} \hookrightarrow W^{1,\infty}$. For the second we observe that
$$\norm{\partial_j\mathscr{A}_s}_{L^2_h} \leq c\norm{w_s}_{H^2_{\partial\mathscr{O}}} \leq c\norm{w_s}_{H^{\gamma}}$$
by the trace inequality, as $\frac{1}{2} < \gamma -2$. In the third term,
$$\norm{\partial_t\mathscr{A}_s}_{L^2_h} \leq c\norm{\partial_tw_s}_{H^1_{\partial\mathscr{O}}} \leq c\norm{\partial_tw_s}_{H^{\gamma -1}}$$
using the trace inequality again as $\frac{1}{2} < (\gamma-1)-1$. Now we can substitute in the explicit form $\partial_tw_s = B(w_s,w_s)$, and using that $H^{\gamma-1}$ is an algebra as $\frac{3}{2} < \gamma -1$ the result swiftly follows.
    
\end{proof}

We conclude this subsection with a final inequality to be used in the main proof.

\begin{lemma} \label{Hardy Littlewood}
For $1 < p \leq \infty$, let $f\in W^{1,p}(\mathscr{O};\R)$ be such that $f = 0$ on $\partial \mathscr{O}$. Then
$$\norm{\frac{f(x,y,z)}{z}}_{L^p} + \norm{\frac{f(x,y,z)}{1-z}}_{L^p} \leq c\norm{\partial_3f}_{L^p}$$
for some constant $c$ independent of $f$.

\end{lemma}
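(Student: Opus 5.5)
The plan is to reduce the statement to the classical one-dimensional Hardy inequality, applied on each vertical fibre and then integrated over the horizontal variables. The one-dimensional statement we use is the following. For $1<p<\infty$ and $g$ absolutely continuous on $(0,1)$ with $g(0)=0$,
$$\int_0^1\left|\frac{g(z)}{z}\right|^p dz \leq \left(\frac{p}{p-1}\right)^p\int_0^1 |g'(z)|^p dz,$$
and by the substitution $z\mapsto 1-z$ the same bound holds for $g(z)/(1-z)$ when $g(1)=0$.

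\textbf{The case $p=\infty$.} This is immediate. Since $g(0)=0$,
$$|g(z)| = \left|\int_0^z g'\right| \leq z\norm{g'}_{L^\infty},$$
and similarly near $z=1$.

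\textbf{The case $1<p<\infty$.} Write $g(z)=\int_0^z g'(s)\,ds$, so that $g(z)/z$ is the Hardy averaging operator applied to $g'$. The bound is then Hardy's inequality. It follows by Minkowski's integral inequality after the change of variables $s=zt$:
$$\left(\int_0^1\left|\int_0^1 g'(zt)\,dt\right|^p dz\right)^{1/p} \leq \int_0^1 \left(\int_0^1|g'(zt)|^p dz\right)^{1/p}dt \leq \int_0^1 t^{-1/p}\,dt\,\norm{g'}_{L^p} = \frac{p}{p-1}\norm{g'}_{L^p}.$$
The factor $\frac{p}{p-1}$ is finite precisely because $p>1$.

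\textbf{Passing to $\mathscr{O}$.} First take $f$ smooth up to the boundary with $f=0$ on $\partial\mathscr{O}=\T^2\times\{0,1\}$. Then for every $(x,y)\in\T^2$ the fibre $g=f(x,y,\cdot)$ vanishes at $z=0$ and $z=1$. Apply the one-dimensional bound to each fibre, integrate in $(x,y)$, and use Fubini to obtain
$$\norm{f/z}_{L^p}\leq \frac{p}{p-1}\norm{\partial_3 f}_{L^p},$$
and likewise for $f/(1-z)$. For $p=\infty$ one takes the supremum over $(x,y)$ instead. Adding the two bounds gives the statement with $c=\frac{2p}{p-1}$ for finite $p$ and $c=2$ for $p=\infty$.

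\textbf{Density.} The main technical point is the justification for general $f\in W^{1,p}$ with zero trace. For $1<p<\infty$, smooth functions vanishing near $\partial\mathscr{O}$ are dense in the zero-trace subspace of $W^{1,p}$. Take such approximations $f_n\to f$ in $W^{1,p}$ and, along a subsequence, almost everywhere. Fatou's lemma applied to $|f_n/z|^p$ then transfers the inequality to $f$.

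For $p=\infty$ density fails. Instead we use that a $W^{1,\infty}$ function is Lipschitz, so it is continuous up to the boundary and the zero trace holds pointwise. Each fibre $g=f(x,y,\cdot)$ is then Lipschitz with $|g'|\leq\norm{\partial_3 f}_{L^\infty}$ almost everywhere, and $g(0)=g(1)=0$, so the direct argument above applies verbatim.
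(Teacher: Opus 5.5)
Your proof is correct. It shares the paper's overall structure: a one-dimensional estimate on each vertical fibre, integration over $\T^2$, and the top plate handled by $z\mapsto 1-z$. The key one-dimensional lemma, however, is different.

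The paper enlarges the one-sided average to a centred one, $\frac{1}{z}\int_0^z\abs{\partial_3 f}\leq\frac{1}{z}\int_0^{2z}\abs{\partial_3 f}$. It dominates this by the Hardy--Littlewood maximal function of $\partial_3 f$, implicitly extended by zero outside $(0,1)$, and then applies the maximal theorem, which covers every $1<p\leq\infty$ in one stroke.

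You instead prove Hardy's inequality directly, via the substitution $s=zt$ and Minkowski's integral inequality. This route is elementary and self-contained, gives the sharp constant $\frac{p}{p-1}$, and shows exactly where $p>1$ enters. The price is a separate, trivial treatment of $p=\infty$.

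Your density-plus-Fatou step for $1<p<\infty$ and your Lipschitz argument for $p=\infty$ also supply something the paper omits. The paper writes $f(x,y,z)=\int_0^z\partial_3 f(x,y,\eta)\,d\eta$ directly from $f\in W^{1,p}$ with zero trace. That tacitly uses the true but unstated fact that almost every fibre is absolutely continuous and vanishes at the endpoints; your approach avoids needing it.
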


\begin{proof}
    Starting with the first term, then $f = 0$ on $\partial \mathscr{O}$ implies that $f(x,y,0) = 0$ and as $f\in W^{1,p}(\mathscr{O};\R)$ we may write
    $$f(x,y,z) = \int_0^z\partial_3f(x,y,\eta)d\eta.$$
Suppressing the dependence on $x,y$, note that
\begin{align*}
    \abs{\frac{f(z)}{z}} \leq \frac{1}{z}\int_0^z\abs{\partial_3f(\eta)}d\eta \leq \frac{1}{z}\int_0^{2z}\abs{\partial_3f(\eta)}d\eta \leq \sup_{0 < r}\left(\frac{1}{r}\int_{z-r}^{z + r}\abs{\partial_3f(\eta)}d\eta\right)
\end{align*}
and the Hardy-Littlewood Maximal Inequality gives us that
\begin{align*}
    \norm{ \sup_{0 < r}\left(\frac{1}{r}\int_{z-r}^{z + r}\abs{\partial_3f(\eta)}d\eta\right)}_{L^p_z} \leq c\norm{\partial_3f}_{L^p_z}.
\end{align*}
Therefore
$$ \norm{\frac{f(x,y,z)}{z}}_{L^p} = \norm{ \norm{\frac{f(x,y,z)}{z}}_{L^p_z}}_{L^p_h} \leq c\norm{\norm{\partial_3f}_{L^p_z}}_{L^p_h} = c\norm{\partial_3f}_{L^p}$$
as required. The second term is treated symmetrically, now using that $f(x,y,1) = 0$ so
$$f(x,y,z) = -\int_z^1\partial_3f(x,y,\eta)d\eta$$
and 
\begin{align*}
    \abs{\frac{f(z)}{1-z}} \leq \frac{1}{1-z}\int_z^1\abs{\partial_3f(\eta)}d\eta \leq \frac{1}{1-z}\int_{2z-1}^{1}\abs{\partial_3f(\eta)}d\eta \leq \sup_{0 < r}\left(\frac{1}{r}\int_{(1-z)-r}^{(1-z) + r}\abs{\partial_3f(\eta)}d\eta\right)
\end{align*}
leading to the same conclusion, as
$$\norm{ \sup_{0 < r}\left(\frac{1}{r}\int_{z-r}^{z + r}\abs{\partial_3f(\eta)}d\eta\right)}_{L^p_z} = \norm{ \sup_{0 < r}\left(\frac{1}{r}\int_{(1-z)-r}^{(1-z) + r}\abs{\partial_3f(\eta)}d\eta\right)}_{L^p_z}.$$


\end{proof}






\subsection{Stochastic Preliminaries} \label{subs stoch prelim}

By a filtered probability space $\mathcal{S}$, we mean a quartet $(\Omega,\mathcal{F},(\mathcal{F}_t), \mathbbm{P})$ satisfying the usual conditions of completeness and right continuity. Let us fix an auxiliary Hilbert Space $\mathfrak{U}$ with orthonormal basis $(e_i)$. We say that $\mathcal{W}$ is a Cylindrical Brownian Motion over $\mathfrak{U}$ with respect to $\mathcal{S}$ if $\mathcal{W}_t = \sum_{i=1}^\infty e_iW^i_t$ as a limit in $L^2(\Omega;\mathfrak{U}')$ for some collection $(W^i)$ of i.i.d. standard real valued Brownian Motions with respect to $\mathcal{S}$, and $\mathfrak{U}'$ an enlargement of the Hilbert Space $\mathfrak{U}$ such that the embedding $J: \mathfrak{U} \rightarrow \mathfrak{U}'$ is Hilbert-Schmidt. In this case $\mathcal{W}$ is a $JJ^*-$Cylindrical Brownian Motion over $\mathfrak{U}'$. Given a process $F:[0,T] \times \Omega \rightarrow \mathscr{L}^2(\mathfrak{U};\mathscr{H})$ progressively measurable and such that $F \in L^2\left(\Omega \times [0,T];\mathscr{L}^2(\mathfrak{U};\mathscr{H})\right)$, for any $0 \leq t \leq T$ we define the stochastic integral $$\int_0^tF_sd\mathcal{W}_s \coloneqq \sum_{i=1}^\infty \int_0^tF_s(e_i)dW^i_s,$$ where the infinite sum is taken in $L^2(\Omega;\mathscr{H})$. We can extend this notion to processes $F$ which are such that $F(\omega) \in L^2\left( [0,T];\mathscr{L}^2(\mathfrak{U};\mathscr{H})\right)$ for $\mathbbm{P}-a.e.$ $\omega$ via the traditional localisation procedure. In this case the stochastic integral is a local martingale in $\mathscr{H}$. We thus consider $\mathcal{G}$ and its relatives as an operator on $\mathfrak{U}$ by $\mathcal{G}(e_i) = \mathcal{G}_i$, see [\cite{goodair2024stochastic}] Subchapter 3.2. We defer to [\cite{goodair2024stochastic}] Chapter 2 for further details on this construction and properties of the stochastic integral. We shall make use of the Burkholder-Davis-Gundy Inequality ([\cite{da2014stochastic}] Theorem 4.36) and the energy identity ([\cite{goodair2024stochastic}] Proposition 4.3, [\cite{liu2015stochastic}] Theorem 4.2.5).

\subsection{Transport-Stretching Noise} \label{subs transport stretch}

We now address key properties of the anisotropically scaled transport-stretching noise operator, starting by recalling the definition of $\mathcal{G}$ from (\ref{definition of G}) for vector fields $\phi$, $f$ as
$$\mathcal{G}_{\phi}f = \sum_{j=1}^3\left(\phi^j\partial_jf + f^j\nabla \phi^j\right).$$

\begin{lemma} \label{unnamed}
    Let $\phi \in H^1$ satisfy that $\phi \cdot \underline{n} = 0$ on $\partial \mathscr{O}$. Then for all $f,g \in H^1$, we have that
    \begin{align*}
        \inner{\mathcal{L}_{\phi}f}{g} &= -\inner{f}{\mathcal{L}_{\phi}g} - \inner{f}{\left(\sum_{j=1}^3\partial_j\phi^j\right)g},\\
        \inner{\mathcal{T}_{\phi}f}{g} &= \inner{f}{\mathcal{L}_g\phi}.
    \end{align*}
Therefore, we define the operators $\mathcal{L}_{\phi}^*$, $\mathcal{T}_{\phi}^*$ and $\mathcal{G}_{\phi}^*$ on $H^1$ by
\begin{align*}
    \mathcal{L}_{\phi}^*f &= -\mathcal{L}_{\phi}f - \left(\sum_{j=1}^3\partial_j\phi^j\right)f,\\
    \mathcal{T}_{\phi}^*f &= \mathcal{L}_{f}\phi,\\
    \mathcal{G}_{\phi}^*f &= \mathcal{L}_{\phi}^*f + \mathcal{T}_{\phi}^*f.
\end{align*}
    
\end{lemma}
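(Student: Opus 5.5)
We work with the splitting $\mathcal{G}_\phi = \mathcal{L}_\phi + \mathcal{T}_\phi$ implicit in the statement, where $\mathcal{L}_\phi f = \sum_{j=1}^3 \phi^j\partial_j f$ is the transport part and $\mathcal{T}_\phi f = \sum_{j=1}^3 f^j \nabla\phi^j$ is the stretching part. Both identities follow from integration by parts together with the impermeability condition. By density we first take $\phi, f, g$ smooth up to the boundary. The general case $\phi, f, g \in H^1$ then follows by approximation, since each side is a finite sum of integrals of trilinear products of the form $\phi\,\nabla f\, g$ and $(\nabla\cdot\phi) fg$. For the approximation we must choose smooth approximants of $\phi$ that preserve $\phi^3 = 0$ on $\partial\mathscr{O}$. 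On $\T^2\times(0,1)$ this is easy: mollify horizontally, and treat the vertical direction via the zero trace of $\phi^3$.

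\textbf{First identity.} We expand
$$\inner{\mathcal{L}_\phi f}{g} = \sum_{j,k}\int_{\mathscr{O}} \phi^j \partial_j f^k\, g^k$$
and integrate by parts in $x_j$. The horizontal directions $j=1,2$ produce no boundary terms because of periodicity on $\T^2$. For $j=3$ the boundary term is $\int_{\partial\mathscr{O}} \phi^3 f^k g^k\, n^3$, which vanishes because $\phi\cdot\underline{n} = \pm\phi^3 = 0$ there. What remains is
$$-\sum_{j,k}\int f^k\,\partial_j(\phi^j g^k) = -\inner{f}{\mathcal{L}_\phi g} - \inner{f}{(\nabla\cdot\phi)\, g},$$
which is exactly the claimed identity.

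\textbf{Second identity.} This is purely algebraic and needs no integration by parts. We write
$$\inner{\mathcal{T}_\phi f}{g} = \sum_{j,k}\int f^j\,\partial_k\phi^j\, g^k = \sum_j \int f^j \Big(\sum_k g^k\partial_k\phi^j\Big) = \inner{f}{\mathcal{L}_g\phi}.$$

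The definitions of $\mathcal{L}_\phi^*$, $\mathcal{T}_\phi^*$ and $\mathcal{G}_\phi^*$ then simply record these adjoint relations. By linearity this gives $\inner{\mathcal{G}_\phi f}{g} = \inner{f}{\mathcal{G}_\phi^* g}$.

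\textbf{Main obstacle.} The only delicate point is integrability in the density argument. We need products such as $\phi\,\partial f\, g$ to be integrable for $\phi, f, g \in H^1$. In three dimensions $H^1\hookrightarrow L^6$, so $\phi g \in L^3 \subset L^2$ and the product $\phi\,\partial f\, g$ lies in $L^1$. Continuity of each term in each argument under $H^1$ convergence then follows by H\"older's inequality, which justifies passing from smooth functions to the $H^1$ statement.
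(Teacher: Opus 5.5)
Your proof is correct and follows the paper's argument. The paper adds and subtracts $(\partial_j\phi^j)f^lg^l$ and integrates $\partial_j(\phi^jf^l)$ by parts against $g^l$, which is the same computation as your integration by parts of $\partial_jf^k$ against $\phi^jg^k$, with the boundary term killed by $\phi\cdot\underline{n}=0$; the second identity is the identical index rearrangement, and your density and $L^6$-integrability remarks simply supply justification that the paper leaves implicit.
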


\begin{proof}
We observe that
       \begin{align*}
    \inner{\mathcal{L}_{\phi} f}{g} &= \sum_{j=1}^3\sum_{l=1}^3\inner{\phi^j\partial_jf^l}{g^l}\\
    &= \sum_{j=1}^3\sum_{l=1}^3\left(\inner{\phi^j\partial_jf^l}{g^l} + \inner{\partial_j\phi^jf^l}{g^l} - \inner{\partial_j\phi^jf^l}{g^l}\right)\\
    &= \sum_{j=1}^3\sum_{l=1}^3\left(\inner{\partial_j(\phi^jf^l)}{g^l} - \inner{\partial_j\phi^jf^l}{g^l}\right)\\
    &= -\sum_{j=1}^3\sum_{l=1}^3\inner{\phi^jf^l}{\partial_jg^l} + \sum_{j=1}^3\sum_{l=1}^3\inner{\phi^jf^l}{g^l\underline{n}^j}_{L^2(\partial \mathscr{O};\R)} - \sum_{l=1}^3\inner{f^l}{\left(\sum_{j=1}^3\partial_j\phi^j\right)g^l}\\
    &= -\inner{f}{\mathcal{L}_{\phi} g} - \inner{f}{\left(\sum_{j=1}^3\partial_j\phi^j\right)g}
\end{align*}
    where we have used that $\sum_{j=1}^3\phi^j \underline{n}^j = \phi \cdot \underline{n} = 0$. For the second result, we simply observe that
    \begin{align*}
        \inner{\mathcal{T}_{\phi}f}{g} = \sum_{j=1}^3\sum_{l=1}^3\inner{f^j\partial_l\phi^j}{g^l} = \sum_{j=1}^3\sum_{l=1}^3\inner{f^j}{g^l\partial_l\phi^j} = \sum_{j=1}^3\inner{f^j}{\mathcal{L}_g\phi^j} = \inner{f}{\mathcal{L}_g\phi}.
    \end{align*}
\end{proof}

Let us now fix the spatial correlation functions $(\xi_i)$, satisfying $\xi_i \cdot \underline{n} = 0$ on $\partial \mathscr{O}$ and\\ $\sum_{i=1}^\infty \norm{\xi_i}_{W^{2,\infty}}^2 < \infty$. We further recall and extend several notations from Subsection \ref{subs structure}, firstly the splitting and scaling 
\begin{align*}
    \xi_i^h = \left(\xi_i^1, \xi_i^2, 0\right), \qquad \xi_i^z = \left(0, 0, \xi_i^3 \right), \qquad \tilde{\xi}_i^h = \nu_h^{\frac{1}{2}}\xi_i^h, \qquad \tilde{\xi}_i^z = \nu_z^{\frac{1}{2}}\xi_i^z
\end{align*}
with
$$ \tilde{\xi}_i = \tilde{\xi}_i^h + \tilde{\xi}_i^z = \left(\nu_h^{\frac{1}{2}}\xi_i^1, \nu_h^{\frac{1}{2}}\xi_i^2, \nu_z^{\frac{1}{2}}\xi_i^3\right)$$
and then the corresponding operators
$$\tilde{\mathcal{G}}_i = \mathcal{G}_{\tilde{\xi}_i}, \qquad \tilde{\mathcal{G}}^h_i = \mathcal{G}_{\tilde{\xi}_i^h}, \qquad \tilde{\mathcal{G}}_i^z = \mathcal{G}_{\tilde{\xi}_i^z}. $$
The property $\phi \cdot \underline{n} = 0$ is preserved for all variants of $\xi_i$ considered, as it is equivalent to $\phi^3 = 0$ on $\partial \mathscr{O}$. We thus obtain expressions for $\tilde{\mathcal{G}}_i^*$, $\tilde{\mathcal{G}}_i^{h,*}$ and $\tilde{\mathcal{G}}_i^{z,*}$ due to Lemma \ref{unnamed}. Noting that $\mathcal{L}_{\tilde{\xi}_i^h}f = \sum_{j=1}^2\tilde{\xi}_i^j\partial_jf$ and $\mathcal{L}_{\tilde{\xi}_i^z}f = \tilde{\xi}_i^3\partial_3f$, one deduces the following bounds.

\begin{lemma} \label{bounds on split noise}
There exists a constant $c$ such that, for all $f \in H^1$,
\begin{align*}
    \norm{\tilde{\mathcal{G}}_i^hf} + \norm{\tilde{\mathcal{G}}_i^{h,*}f} &\leq c\norm{\tilde{\xi}_i^h}_{W^{1,\infty}}\left(\sum_{j=1}^2\norm{\partial_jf} + \norm{f}\right),\\
     \norm{\tilde{\mathcal{G}}_i^zf} + \norm{\tilde{\mathcal{G}}_i^{z,*}f} &\leq c\norm{\tilde{\xi}_i^z}_{W^{1,\infty}}\left(\norm{\partial_3f} + \norm{f}\right).
\end{align*}
    
\end{lemma}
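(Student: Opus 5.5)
The bound follows by expanding each operator through Lemma \ref{unnamed} and estimating term by term with Hölder's inequality. The key point is that the operators built from the split fields only differentiate $f$ in the matching directions.

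\textbf{Horizontal operator.} Write $\tilde{\mathcal{G}}_i^h f = \mathcal{L}_{\tilde{\xi}_i^h}f + \mathcal{T}_{\tilde{\xi}_i^h}f$. By the remark before the statement, $\mathcal{L}_{\tilde{\xi}_i^h}f = \sum_{j=1}^2\tilde{\xi}_i^j\partial_jf$. Hence
$$\norm{\mathcal{L}_{\tilde{\xi}_i^h}f}\leq \norm{\tilde{\xi}_i^h}_{L^\infty}\sum_{j=1}^2\norm{\partial_jf}.$$
The stretching term is $\mathcal{T}_{\tilde{\xi}_i^h}f = \sum_{j=1}^3 f^j\nabla(\tilde{\xi}_i^h)^j$, and only $j=1,2$ contribute since the third component vanishes. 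It contains no derivative of $f$, so $\norm{\mathcal{T}_{\tilde{\xi}_i^h}f}\leq c\norm{\tilde{\xi}_i^h}_{W^{1,\infty}}\norm{f}$.

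\textbf{Horizontal adjoint.} By Lemma \ref{unnamed},
$$\tilde{\mathcal{G}}_i^{h,*}f = -\mathcal{L}_{\tilde{\xi}_i^h}f - \Big(\sum_{j=1}^2\partial_j\tilde{\xi}_i^j\Big)f + \mathcal{L}_f\tilde{\xi}_i^h .$$
The first term was handled above. The second is bounded by $c\norm{\tilde{\xi}_i^h}_{W^{1,\infty}}\norm{f}$. The third is $\mathcal{L}_f\tilde{\xi}_i^h = \sum_{l=1}^3 f^l\partial_l\tilde{\xi}_i^h$, which again involves no derivative of $f$. It is therefore bounded by $c\norm{\nabla\tilde{\xi}_i^h}_{L^\infty}\norm{f}$.

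\textbf{Vertical operator and adjoint.} The same expansion works with $\tilde{\xi}_i^z$, using $\mathcal{L}_{\tilde{\xi}_i^z}f = \tilde{\xi}_i^3\partial_3 f$. The transport term is bounded by $\norm{\tilde{\xi}_i^z}_{L^\infty}\norm{\partial_3 f}$. The stretching term $f^3\nabla\tilde{\xi}_i^3$, the divergence term $(\partial_3\tilde{\xi}_i^3)f$ and the term $\mathcal{L}_f\tilde{\xi}_i^z = \sum_l f^l\partial_l\tilde{\xi}_i^z$ are each bounded by $c\norm{\tilde{\xi}_i^z}_{W^{1,\infty}}\norm{f}$.

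Summing these bounds gives both inequalities, with $c$ an absolute constant that accounts only for the finitely many components.

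The only point requiring care is that no term produces a derivative of $f$ in the "wrong" direction. This holds because derivatives of $f$ arise only through $\mathcal{L}_{\tilde{\xi}_i^{h}}$ and $\mathcal{L}_{\tilde{\xi}_i^{z}}$. In $\mathcal{T}^*$ the roles are swapped, so the derivatives fall on $\tilde{\xi}_i$ rather than on $f$. The adjoint formulas of Lemma \ref{unnamed} apply because $\tilde{\xi}_i^h$ and $\tilde{\xi}_i^z$ both satisfy $\phi\cdot\underline{n}=0$: for $\tilde{\xi}_i^h$ the third component is identically zero, and for $\tilde{\xi}_i^z$ the third component is $\nu_z^{\frac12}\xi_i^3$, which vanishes on $\partial\mathscr{O}$.
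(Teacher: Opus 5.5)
Your proposal is correct and follows the same route as the paper, which only notes that $\mathcal{L}_{\tilde{\xi}_i^h}f = \sum_{j=1}^2\tilde{\xi}_i^j\partial_jf$ and $\mathcal{L}_{\tilde{\xi}_i^z}f = \tilde{\xi}_i^3\partial_3f$ and leaves the term-by-term H\"{o}lder estimates via the adjoint formulas of Lemma \ref{unnamed} implicit. You supply exactly those estimates, including the observation that $\mathcal{T}^*_{\phi}f = \mathcal{L}_f\phi$ places all derivatives on the correlation field rather than on $f$.
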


We also prove the key estimate discussed in Subsection \ref{subs aspects}.
\begin{lemma}
    For any parameter $0 < \delta$, there exists a constant $c_{\delta}$ such that, for all $f\in H^2$,
        \begin{align}
   \inner{\tilde{\mathcal{G}}_i^2f}{f} +  \norm{\tilde{\mathcal{G}}_if}^2  &\leq c_{\delta}\norm{\tilde{\xi}_i}_{W^{1,\infty}}^2\norm{f}^2 + \delta\norm{\tilde{\xi}_i^h}_{L^{\infty}}^2\sum_{j=1}^2\norm{\partial_jf}^2 +  \delta\norm{\tilde{\xi}_i^z}_{L^{\infty}}^2\norm{\partial_3f}^2 \label{bigbound1NEW} ,\\
    \inner{\tilde{\mathcal{G}}_if}{f}^2 &\leq c\norm{\tilde{\xi}_i}^2_{W^{1,\infty}}\norm{f}^4. \label{bigbound2NEW}
\end{align}
\end{lemma}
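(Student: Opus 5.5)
Write $\phi=\tilde{\xi}_i$ and $d=\sum_{j=1}^3\partial_j\phi^j$. By Lemma \ref{unnamed} we have $\mathcal{G}_\phi=\mathcal{L}_\phi+\mathcal{T}_\phi$ with adjoint $\mathcal{G}^*_\phi=-\mathcal{L}_\phi-d\,I+\mathcal{T}^*_\phi$. The plan is to move one copy of $\tilde{\mathcal{G}}_i$ across the inner product and track the remainder explicitly.

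\textbf{Proof of (\ref{bigbound1NEW}).} For $f\in H^2$ the function $\tilde{\mathcal{G}}_if$ lies in $H^1$, so Lemma \ref{unnamed} applies and gives
\[
\inner{\tilde{\mathcal{G}}_i^2f}{f}=\inner{\tilde{\mathcal{G}}_if}{\mathcal{G}^*_\phi f}=-\inner{\tilde{\mathcal{G}}_if}{\mathcal{L}_\phi f}+\inner{\tilde{\mathcal{G}}_if}{(\mathcal{T}^*_\phi-d)f}.
\]
Adding $\norm{\tilde{\mathcal{G}}_if}^2=\inner{\tilde{\mathcal{G}}_if}{\mathcal{L}_\phi f+\mathcal{T}_\phi f}$, the transport parts cancel and the sum becomes
\[
\inner{\tilde{\mathcal{G}}_if}{(\mathcal{T}_\phi+\mathcal{T}_\phi^*-d)f}.
\]
The operator $\mathcal{T}_\phi+\mathcal{T}^*_\phi-d$ is zeroth order. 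Indeed $\mathcal{T}^*_\phi f=\mathcal{L}_f\phi=\sum_j f^j\partial_j\phi$, so its $L^2$ operator norm is bounded by $c\norm{\phi}_{W^{1,\infty}}$. This gives
\[
\inner{\tilde{\mathcal{G}}_i^2f}{f}+\norm{\tilde{\mathcal{G}}_if}^2\le c\norm{\phi}_{W^{1,\infty}}\norm{f}\,\norm{\tilde{\mathcal{G}}_if}.
\]

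Now split $\tilde{\mathcal{G}}_if=\mathcal{L}_{\tilde{\xi}_i^h}f+\mathcal{L}_{\tilde{\xi}_i^z}f+\mathcal{T}_\phi f$. The anisotropic structure matters here, and I would not use Lemma \ref{bounds on split noise} wholesale, since that lemma puts $W^{1,\infty}$ norms on the derivative terms. Instead use the pointwise bounds
\[
\norm{\mathcal{L}_{\tilde{\xi}_i^h}f}\le\norm{\tilde{\xi}_i^h}_{L^\infty}\sum_{j=1}^2\norm{\partial_jf},\qquad \norm{\mathcal{L}_{\tilde{\xi}_i^z}f}\le\norm{\tilde{\xi}_i^z}_{L^\infty}\norm{\partial_3f},\qquad \norm{\mathcal{T}_\phi f}\le c\norm{\phi}_{W^{1,\infty}}\norm{f}.
\]
Then apply Young's inequality with parameter $\delta$ to each product, of the form $ab\le c_\delta a^2+\delta b^2$. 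Take $a=c\norm{\phi}_{W^{1,\infty}}\norm{f}$ and $b$ the respective derivative bound. After adjusting constants (for instance using $(\sum_{j=1}^2\norm{\partial_j f})^2\le 2\sum_{j=1}^2\norm{\partial_jf}^2$), this yields exactly the right-hand side of (\ref{bigbound1NEW}).

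\textbf{Proof of (\ref{bigbound2NEW}).} Use Lemma \ref{unnamed} with $g=f$:
\[
\inner{\mathcal{L}_\phi f}{f}=-\inner{f}{\mathcal{L}_\phi f}-\inner{f}{df},\qquad\text{so}\qquad \inner{\mathcal{L}_\phi f}{f}=-\tfrac12\inner{f}{df}.
\]
Hence $|\inner{\tilde{\mathcal{G}}_if}{f}|\le \tfrac12\norm{d}_{L^\infty}\norm{f}^2+|\inner{\mathcal{T}_\phi f}{f}|\le c\norm{\phi}_{W^{1,\infty}}\norm{f}^2$. Squaring gives (\ref{bigbound2NEW}).

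The main point of care is keeping the derivative terms weighted only by the $L^\infty$ norms of the directional components $\tilde{\xi}^h_i$ and $\tilde{\xi}^z_i$, rather than the full $\norm{\tilde{\xi}_i}$. This is achieved by letting the non-divergence-free remainder $d$ and the stretching terms act only at zeroth order, as above.
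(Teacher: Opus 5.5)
Your proposal is correct and follows the paper's proof essentially step for step. You move one copy of $\tilde{\mathcal{G}}_i$ across via the adjoint from Lemma~\ref{unnamed}, so the transport parts cancel and only the zeroth-order remainder $\mathcal{T}_{\tilde{\xi}_i}^*+\mathcal{T}_{\tilde{\xi}_i}-\sum_j\partial_j\tilde{\xi}_i^j$ survives; you then split $\tilde{\mathcal{G}}_if$ into horizontal transport, vertical transport and stretching with $L^\infty$ weights before Young's inequality, and handle the second bound via $\inner{\mathcal{L}_{\tilde{\xi}_i}f}{f}=-\frac12\inner{f}{(\sum_j\partial_j\tilde{\xi}_i^j)f}$, all exactly as the paper does (your check that $\tilde{\mathcal{G}}_if\in H^1$ for $f\in H^2$, which the paper leaves implicit, is a welcome extra detail).
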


\begin{proof}
    In the direction of (\ref{bigbound1NEW}), we have that
    \begin{align} \nonumber
        \inner{\tilde{\mathcal{G}}_i^2f}{f} +  \norm{\tilde{\mathcal{G}}_if}^2 = \inner{\tilde{\mathcal{G}}_if}{\tilde{\mathcal{G}}_i^*f + \tilde{\mathcal{G}}_if} &= \inner{\tilde{\mathcal{G}}_if}{\mathcal{T}_{\tilde{\xi}_i}^*f + \mathcal{T}_{\tilde{\xi}_i}f -\left(\sum_{j=1}^3 \partial_j\tilde{\xi}_i^j\right)f}\\ &\leq c\norm{\tilde{\mathcal{G}}_if}\norm{\tilde{\xi}_i}_{W^{1,\infty}}\norm{f}. \label{pluggers}
    \end{align}
We split $\tilde{\mathcal{G}}_if$ into $\mathcal{L}_{\tilde{\xi}_i^h}f +\mathcal{L}_{\tilde{\xi}_i^z}f + \mathcal{T}_{\tilde{\xi}_i}f $, from which we have that
$$\norm{\tilde{\mathcal{G}}_if} \leq c\left(\sum_{j=1}^2\norm{\tilde{\xi}_i^h}_{L^{\infty}}\norm{\partial_jf} + \norm{\tilde{\xi}_i^z}_{L^{\infty}}\norm{\partial_3f} + \norm{\tilde{\xi}_i}_{W^{1,\infty}}\norm{f}\right).$$
Plugging this bound into (\ref{pluggers}) and applying Young's Inequality in the first two terms allows us to conclude (\ref{bigbound1NEW}). As for (\ref{bigbound2NEW}), observe that
\begin{align*}
    \inner{\tilde{\mathcal{G}}_if}{f}^2 \leq 2\inner{\mathcal{L}_{\tilde{\xi}_i}f}{f}^2 + 2\inner{\mathcal{T}_{\tilde{\xi}_i}f}{f}^2 \leq 2\inner{\mathcal{L}_{\tilde{\xi}_i}f}{f}^2 + c\norm{\tilde{\xi}_i}^2_{W^{1,\infty}}\norm{f}^4.
\end{align*}
One must deal with the transport term, for which we see that
\begin{align*}
    \inner{\mathcal{L}_{\tilde{\xi}_i}f}{f} = -\inner{\mathcal{L}_{\tilde{\xi}_i}f}{f} - \inner{f}{\left(\sum_{j=1}^3\partial_j\tilde{\xi}_i^j\right)f}
\end{align*}
hence
$$ \inner{\mathcal{L}_{\tilde{\xi}_i}f}{f}^2 = \frac{1}{4}\inner{f}{\left(\sum_{j=1}^3\partial_j\tilde{\xi}_i^j\right)f}^2 \leq c\norm{\tilde{\xi}_i}^2_{W^{1,\infty}}\norm{f}^4$$
    as required.
\end{proof}

The constants in the above lemmas are of course independent of $\xi_i$, $\nu_h$ and $\nu_z$.

\section{Anisotropic Inviscid Limit}

\subsection{Definitions and Main Result}

 We shall work on the interval $[0,T]$ for the duration of this section, where $T$ was the (finite) lifetime of existence of the Euler equation given in Subsection \ref{subs funct anal}. Let us begin by defining the notion of a martingale weak solution of the equation (\ref{expanded ito form}). We shall use the more compact representation (\ref{some main equation 2 Ito}).

\begin{definition} \label{definitionofspacetimeweakmartingale}
Let $\mathcal{S}$ be a filtered probability space. A pair $(u, \mathcal{W})$, where $\mathcal{W}$ is a Cylindrical Brownian Motion over $\mathfrak{U}$ with respect to $\mathcal{S}$, and $u$ is a progressively measurable process in $W^{1,2}_{\sigma}$ such that  $u \in  L^{\infty}\left([0,T];L^2_{\sigma}\right) \cap L^2\left([0,T];W^{1,2}_{\sigma}\right)$ $\mathbbm{P}-a.s.$, is said to be a martingale weak solution of (\ref{expanded ito form}) with respect to $\mathcal{S}$ if the identity
\begin{align*}
     \inner{u_t}{\phi} = \inner{u_0}{\phi} &- \int_0^t\inner{B(u_s, u_s)}{\phi} ds  - \nu_{h}\int_0^t \sum_{j=1}^2\inner{\partial_j u_s}{\partial_j \phi} ds - \nu_z\int_0^t\inner{\partial_3 u_s}{\partial_3\phi} ds\\ &+\frac{1}{2} \int_0^t\sum_{i=1}^\infty\inner{\tilde{\mathcal{G}}_iu_s}{\tilde{\mathcal{G}}_i^*\phi}ds -  \int_0^t\inner{\mathcal{P} \tilde{\mathcal{G}} u_s}{\phi} d\mathcal{W}_s
\end{align*}
holds for every $\phi \in W^{1,2}_{\sigma}$, $\mathbbm{P}-a.s.$ in $\R$, for all $t \in [0,T]$.
\end{definition}

Having established the definition, we can now state the main result. We recall the relevant regularities of $u_0 \in H^{\gamma} \cap L^2_\sigma$ for some $\frac{5}{2} < \gamma$, each $\xi_i \cdot \underline{n} = 0$ on $\partial\mathscr{O}$ with $\sum_{i=1}^\infty\norm{\xi_i}_{W^{2,\infty}}^2 < \infty$, and $w$ as given by (\ref{euler identity}).

\begin{theorem} \label{main result no rotation}
    Let $(\nu_h^k)$, $(\nu_z^k)$ be any two sequences of positive constants such that $(\nu_h^k)$ and $\left(\frac{\nu_z^k}{\nu_h^k} \right)$ converge to zero. Then there exists a filtered probability space $\mathcal{S}$ and a corresponding sequence of martingale weak solutions $\left(u^k, \mathcal{W}^k\right)$ of (\ref{expanded ito form}) with respect to $\mathcal{S}$ such that:
    \begin{enumerate}
        \item $(u^k) \longrightarrow w$ in $L^2\left(\Omega; L^\infty\left([0,T];L^2_{\sigma} \right) \right)$ as $k \longrightarrow \infty$;
        \item $\left(\nu_h^k\left(\sum_{j=1}^2 \partial_j u^k\right)\right)$ and $\left(\nu_z^k \partial_3u^k \right)$ $\longrightarrow 0$ in $L^2\left(\Omega; L^2\left([0,T];L^2_{\sigma} \right) \right)$ as $k \longrightarrow \infty$.
    \end{enumerate}
\end{theorem}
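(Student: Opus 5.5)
The proof has three stages: construct the solutions with uniform energy bounds, put them on a common probability space, and run a Masmoudi-type energy estimate on $u^k - w - \mathscr{B}^k$ at the Galerkin level.

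\emph{Step 1: existence with uniform bounds.} For each fixed $k$ we construct a martingale weak solution of (\ref{expanded ito form}) via a Galerkin scheme $u^{n}$, projected onto the span of the first $n$ Stokes eigenfunctions. The key a priori estimate is It\^{o}'s formula for $\norm{u^n}^2$. Here the corrector and quadratic variation combine to $\sum_i\big(\inner{\tilde{\mathcal{G}}_i^2u^n}{\mathcal{P}_nu^n}+\norm{\mathcal{P}_n\mathcal{P}\tilde{\mathcal{G}}_iu^n}^2\big)$, which is bounded above by $\sum_i\big(\inner{\tilde{\mathcal{G}}_i^2u^n}{u^n}+\norm{\tilde{\mathcal{G}}_iu^n}^2\big)$ because projections are contractions. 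We then apply (\ref{bigbound1NEW}) with $\delta$ small relative to $\sum_i\norm{\xi_i}_{W^{1,\infty}}^2$, which absorbs the derivative terms into $\nu_h\sum_{j=1}^2\norm{\partial_ju^n}^2+\nu_z\norm{\partial_3u^n}^2$. The BDG inequality together with (\ref{bigbound2NEW}) controls the martingale. This gives
\[
\mathbbm{E}\Big(\sup_t\norm{u^n_t}^2+\nu_h\int_0^T\sum_{j=1}^2\norm{\partial_ju^n}^2+\nu_z\int_0^T\norm{\partial_3u^n}^2\Big)\le c
\]
uniformly in $n$ and $k$, the $\norm{u}^2$ coefficient being $c(\nu_h+\nu_z)\le c$. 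Tightness, Skorokhod and Jakubowski arguments give a solution $(u^k,\mathcal{W}^k)$ on a space $\mathcal{S}^k$. Taking the infinite product of the $\mathcal{S}^k$ gives the single space $\mathcal{S}$, on which each $u^k$ remains a solution with its own $\mathcal{W}^k$.

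\emph{Step 2: the energy estimate at the Galerkin level.} Rather than working with $u^k$ directly, we keep the Skorokhod-represented Galerkin approximations $u^{n,k}$ and estimate $\mathbbm{E}\sup_t\norm{u^{n,k}_t-v_t}^2$ with $v=w+\mathscr{B}$, where $\mathscr{B}$ is the corrector of Lemma \ref{masmoudi corrector} built with $\nu=\nu_z$. Expanding $\norm{u^{n,k}}^2-2\inner{u^{n,k}}{v}+\norm{v}^2$, the first term is handled by Step 1. For the cross term we test the Galerkin equation against $\mathcal{P}_nv$ and compute $\partial_t v$ using (\ref{euler identity}) and Lemma \ref{bounds on A}. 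The projections must then be removed: $\mathcal{P}_nv$ is replaced by $v$, producing errors of the form $\norm{(I-\mathcal{P}_n)v}_{W^{1,2}}$. These vanish as $n\to\infty$ because $v_t\in W^{1,2}_\sigma$, but one must check that the bounds are uniform in time, for example by continuity of $t\mapsto v_t$ in $W^{1,2}$ and a Dini-type argument. After this we pass $n\to\infty$ using lower semicontinuity of the norm.

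\emph{Step 3: the Masmoudi term analysis.} The terms to bound are the following.
\begin{itemize}
\item The nonlinear difference $\inner{B(u,u)}{v}-\inner{B(v,v)}{u}$. It reduces to $\inner{((u-v)\cdot\nabla)v}{u-v}$ plus boundary-layer terms in $\mathscr{B}$. The dangerous piece $\inner{(u-v)^3\partial_3\mathscr{B}^h}{u-v}$ is handled by writing $(u-v)^3=z\cdot\frac{(u-v)^3}{z}$ and applying Lemma \ref{Hardy Littlewood} together with $\partial_3(u-v)^3=-\partial_1(u-v)^1-\partial_2(u-v)^2$. This converts the bound into horizontal derivatives, controlled by $\norm{z^2\partial_3\mathscr{M}}_{L^\infty}\le c(\theta\nu_z)^{1/2}$ and Lemma \ref{bounds on A}. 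The resulting terms $(\nu_z)^{1/2}\norm{\partial_hu}\norm{u-v}$ are of size $(\nu_z/\nu_h)^{1/2}\cdot\nu_h^{1/2}\norm{\partial_hu}$, and this is where $\nu_z/\nu_h\to0$ enters.
\item The viscous terms. The term $\nu_z\inner{\partial_3u}{\partial_3\mathscr{B}}\le\nu_z\norm{\partial_3u}\,(\theta\nu_z)^{-1/4}$ is small after Young's inequality. The horizontal viscous terms are harmless.
\item The noise cross terms $\inner{\tilde{\mathcal{G}}_iu}{\tilde{\mathcal{G}}_i^*v}$. By Lemma \ref{bounds on split noise} these carry factors $\nu_h^{1/2}$ or $\nu_z^{1/2}$ matched against energy-controlled derivatives of $u$ and $H^1$-type norms of $v$. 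The vertical part is problematic since $\norm{\partial_3\mathscr{B}}\sim\nu_z^{-1/4}$, but the total is $\nu_z\cdot\nu_z^{-1/4}$-type and vanishes.
\item The martingale $\inner{\mathcal{P}\tilde{\mathcal{G}}u}{v}$, handled by BDG, giving $O(\nu_h^{1/2})$.
\end{itemize}
Gr\"onwall in expectation, with $\norm{\mathscr{B}}\le c(\theta\nu_z)^{1/4}\to0$, then yields conclusion 1. Conclusion 2 follows because the same estimate shows the dissipation $\nu_h\int\norm{\partial_hu^k}^2+\nu_z\int\norm{\partial_3u^k}^2$ converges to $0$ in expectation, and multiplying by $\nu_h^{1/2}$, $\nu_z^{1/2}\to0$ gives the stated limits.

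I expect the main obstacle to be the nonlinear boundary-layer term $\inner{(u-v)^3\partial_3\mathscr{B}}{u-v}$ combined with the Galerkin projection removal, since both require precise Hardy-type bounds that must be uniform in $n$ and $k$.
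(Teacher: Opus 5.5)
There is a genuine gap in your treatment of $\inner{(u-v)^3\partial_3\mathscr{B}^h}{(u-v)^h}$ (with your $v=w+\mathscr{B}$). It hides the one structural choice the argument cannot do without.

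Suppose you divide only $(u-v)^3$ by $z$. Then the weight left on the corrector is $z\partial_3\mathscr{M}$, which is scale invariant and of order one in $L^\infty_z$, because the corrector changes by $O(1)$ across a layer of width $(\theta\nu_z)^{1/2}$. You get $c\norm{\partial_h(u-v)}\norm{u-v}$ with no small prefactor. Young's inequality against $\nu_h\norm{\partial_h(u-v)}^2$ then leaves $c\nu_h^{-1}\norm{u-v}^2$, which ruins Gr\"onwall.

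To use $\norm{z^2\partial_3\mathscr{M}}_{L^\infty}\le c(\theta\nu_z)^{1/2}$ you must divide \emph{both} factors by $z$. Lemma \ref{Hardy Littlewood} applied to $(u-v)^h/z$ yields $\norm{\partial_3(u-v)}$, not $\norm{u-v}$, and the divergence-free identity only converts the $(u-v)^3$ factor into horizontal derivatives. The correct bound is
\[
c(\theta\nu_z)^{\frac12}\norm{\partial_h(u-v)}\norm{\partial_3(u-v)}\le\delta\nu_h\norm{\partial_h(u-v)}^2+\frac{c^2\,\theta}{4\delta\,\nu_h}\,\nu_z\norm{\partial_3(u-v)}^2 .
\]
The last term is absorbed by the vertical dissipation only if $\theta\le\tilde c\,\nu_h$ with $\tilde c$ small relative to $\delta$. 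This is exactly why the paper builds the corrector with $\theta=\tilde c\nu_h$ and fixes $\tilde c$ only after $\delta$.

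You never specify $\theta$, and your scalings ($\norm{\partial_3\mathscr{B}}\sim\nu_z^{-1/4}$, ``$\nu_z\cdot\nu_z^{-1/4}$'') treat it as a fixed constant. In that case $(\theta\nu_z)^{1/2}\gg(\nu_h\nu_z)^{1/2}$, and no split of the anisotropic dissipation $\nu_h\norm{\partial_h(u-v)}^2+\nu_z\norm{\partial_3(u-v)}^2$ can absorb the term.

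With $\theta=\tilde c\nu_h$ the layer has width $(\nu_h\nu_z)^{1/2}$, so $\norm{\mathscr{B}}\lesssim(\nu_h\nu_z)^{1/4}$ and $\norm{\partial_3\mathscr{B}}\lesssim(\nu_h\nu_z)^{-1/4}$. Your account of where $\nu_z/\nu_h\to0$ is used is therefore misplaced. The nonlinear layer term is controlled by the smallness of $\tilde c$, not by the ratio. The hypothesis is instead needed in the following places:
\begin{itemize}
\item the vertical viscous term, where $\nu_z\norm{\partial_3\mathscr{B}}^2\lesssim\nu_z(\nu_h\nu_z)^{-1/2}=(\nu_z/\nu_h)^{1/2}$;
\item every noise term in which $\tilde{\mathcal{G}}_i^z$ falls on $\mathscr{B}$, such as the mixed term $\inner{\tilde{\mathcal{G}}_i^z\mathscr{B}}{\tilde{\mathcal{G}}_i^{h,*}(u-v)}$, of size $\nu_h^{1/2}(\nu_z/\nu_h)^{1/4}\norm{\partial_h(u-v)}$;
\item the martingale term, which is $O((\nu_z/\nu_h)^{1/4})$ rather than $O(\nu_h^{1/2})$.
\end{itemize}
These estimates, which you call harmless under fixed $\theta$, must be redone with the $\nu_h$-dependent layer.

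The rest of your scheme is consistent with the paper's: working at the Galerkin level, removing $\mathcal{P}_n$ via $\norm{(I-\mathcal{P}_n)(w+\mathscr{B})}_{H^1}$, the product space, BDG, and lower semicontinuity.
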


\subsection{Selection of Martingale Weak Solutions} \label{subs selection martingale weak}

In this subsection, we give the construction of martingale weak solutions used in the proof of Theorem \ref{main result no rotation}. In the isotropic case, that is where $\nu_h = \nu_z$, the existence of martingale weak solutions was proven in [\cite{goodair2024weak}] Theorem 5.1. The anisotropy does not disturb the proof in a significant way, by representing the noise term as in (\ref{some main equation 2 Ito}) then one only suffers the worse bound (\ref{bigbound1NEW}) which is entirely sufficient for the application in [\cite{goodair2024weak}] by combining with the viscous term. Note also that bounds in $W^{1,2}_{\sigma}$ are obtained by simply using the smaller coefficient $\nu_z$. We have the following.

\begin{proposition} \label{basic existence martingale weak}
    Let $0 < \nu_h, \nu_z$. Then there exists a filtered probability space $\mathcal{S}$ and a pair $(u, \mathcal{W})$ which is a martingale weak solution of (\ref{expanded ito form}) with respect to $\mathcal{S}$. Moreover, $(u, \mathcal{W})$ is obtained as the $n \rightarrow \infty$ limit of Galerkin Approximations $(u^n, \mathcal{W}^n)$ satisfying
    \begin{align*}
        u^n_t = \mathcal{P}_nu_0 &- \int_0^t\mathcal{P}_nB(u^n_s, u^n_s)\ ds  - \nu_{h}\int_0^t \mathcal{P}_nA_{h} u^n_s\, ds - \nu_z\int_0^t \mathcal{P}_nA_z u^n_s\, ds\\ &+\frac{1}{2}\int_0^t \sum_{i=1}^\infty\mathcal{P}_n\mathcal{P}\tilde{\mathcal{G}}_i^2u^n_sds -  \int_0^t\mathcal{P}_n\mathcal{P} \tilde{\mathcal{G}} u^n_s d\mathcal{W}^n_s.
    \end{align*}
Precisely, $(u^n) \rightarrow u$ in the weak* topology of $L^2\left(\Omega; L^\infty\left([0,T];L^2_{\sigma}\right) \right)$ and the weak topology of $L^2\left(\Omega; L^2\left([0,T];W^{1,2}_{\sigma}\right) \right)$.
    
\end{proposition}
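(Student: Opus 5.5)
The plan is to follow the existence argument of [\cite{goodair2024weak}] Theorem 5.1 verbatim, checking only where the anisotropy and the lack of divergence-free correlation functions enter. Let $(a_k)$ be an orthonormal basis of $L^2_{\sigma}$ consisting of eigenfunctions of the Stokes operator $A = A_h + A_z$ (Dirichlet boundary conditions), with $\mathcal{P}_n$ the orthogonal projection onto $\textnormal{span}\{a_1,\dots,a_n\}$. The Galerkin system displayed in the proposition is then a finite-dimensional SDE with locally Lipschitz coefficients. The noise coefficients are well defined because $\sum_i\norm{\xi_i}_{W^{2,\infty}}^2<\infty$, and so $\sum_i\norm{\mathcal{P}_n\mathcal{P}\tilde{\mathcal{G}}_i^2 f}$ converges on the finite-dimensional space. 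This gives a unique local strong solution $u^n$ on a fixed probability space with a fixed $\mathcal{W}$, and global existence will follow from the uniform energy estimate below.

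The core step is the energy estimate. I would apply It\^{o}'s formula to $\norm{u^n_t}^2$. The nonlinear term vanishes since $\mathcal{P}_n u^n = u^n$ and $\inner{B(u^n,u^n)}{u^n}=0$. The viscous terms produce
$$-2\nu_h\sum_{j=1}^2\norm{\partial_ju^n}^2-2\nu_z\norm{\partial_3u^n}^2.$$
The It\^{o} correction together with the quadratic variation gives $\sum_i\big(\inner{\tilde{\mathcal{G}}_i^2u^n}{u^n}+\norm{\mathcal{P}_n\mathcal{P}\tilde{\mathcal{G}}_iu^n}^2\big)$, and the projections contract norms, so this is at most $\sum_i\big(\inner{\tilde{\mathcal{G}}_i^2u^n}{u^n}+\norm{\tilde{\mathcal{G}}_iu^n}^2\big)$. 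I then apply (\ref{bigbound1NEW}) with $\delta$ small. Since $\norm{\tilde{\xi}_i^h}_{L^\infty}^2=\nu_h\norm{\xi_i^h}_{L^\infty}^2$ and $\norm{\tilde{\xi}_i^z}_{L^\infty}^2=\nu_z\norm{\xi_i^z}_{L^\infty}^2$, summability lets the derivative terms be absorbed into half of the viscous dissipation, leaving $c(\nu_h+\nu_z)\norm{u^n}^2$. The martingale term is handled by Burkholder--Davis--Gundy together with (\ref{bigbound2NEW}). Gr\"{o}nwall then yields
$$\mathbbm{E}\Big(\sup_{t\le T}\norm{u^n_t}^2\Big)+\nu_z\,\mathbbm{E}\int_0^T\norm{\nabla u^n_s}^2ds\le C$$
uniformly in $n$, using $\nu_z\le\nu_h$ or simply the smaller coefficient.

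Next come compactness and the passage to the limit. I would derive a uniform fractional-in-time (or Aldous-type) estimate in a negative Sobolev space. This uses that $\mathcal{P}_nB(u^n,u^n)$ is bounded in $L^{4/3}_tH^{-s}$ and that the noise terms are controlled through Lemma \ref{bounds on split noise}. Tightness of the laws of $u^n$ then holds in $L^2([0,T];L^2_\sigma)\cap C([0,T];H^{-s})$, jointly with $\mathcal{W}^n$. Skorokhod--Jakubowski representation provides a new probability space, which is where the dependence of $\mathcal{W}^n$ on $n$ in the statement comes from. On this space, strong $L^2_tL^2_x$ convergence lets one pass to the limit in the nonlinear term tested against $\phi\in W^{1,2}_\sigma$. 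The Itô--Stratonovich term is passed in the weak form $\inner{\tilde{\mathcal{G}}_iu}{\tilde{\mathcal{G}}_i^*\phi}$ via Lemma \ref{unnamed}, and the stochastic integral by the standard convergence lemma for stochastic integrals with converging integrands and Brownian motions. Lower semicontinuity transfers the uniform bounds to $u$. Because the bounds are uniform, the weak* and weak convergences in $L^2(\Omega;L^\infty L^2_\sigma)$ and $L^2(\Omega;L^2W^{1,2}_\sigma)$ follow along a subsequence, after relabelling.

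The main obstacle is the energy estimate, specifically that the correlation functions are not divergence free. The classical cancellation (\ref{to be replaced 2}) is therefore lost, and one must make sure the residual derivative terms carry precisely the right anisotropic weights to be absorbed. Estimate (\ref{bigbound1NEW}) is designed for exactly this purpose. A secondary subtlety is that the projection $\mathcal{P}_n\mathcal{P}$ sits inside the quadratic variation but not the corrector. I would resolve this by the contraction inequality above, as in [\cite{goodair2024weak}], rather than by any commutation property.
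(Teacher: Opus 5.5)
Your proposal is correct and follows the same route as the paper, which defers to the Galerkin--compactness--Skorokhod argument of [\cite{goodair2024weak}] Theorem 5.1. The paper notes only two changes: the It\^{o}--Stratonovich corrector plus quadratic variation is now controlled by (\ref{bigbound1NEW}) and absorbed into the anisotropic dissipation, and the $W^{1,2}_{\sigma}$ bounds come from the smaller viscosity coefficient. Your extra details are exactly what that deferred argument needs: the contraction $\norm{\mathcal{P}_n\mathcal{P}\tilde{\mathcal{G}}_iu^n}\leq\norm{\tilde{\mathcal{G}}_iu^n}$ handles the corrector/quadratic-variation mismatch (the same device used for $J_6+J_7$ in the main proof), and (\ref{bigbound2NEW}) with Burkholder--Davis--Gundy handles the martingale term.
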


The Galerkin Approximations will be used in the proof of Theorem \ref{main result no rotation} as one cannot look at the $L^2$ norm of weak solutions directly. Whilst Proposition \ref{basic existence martingale weak} informs us that for any given viscosity we can build a filtered probability space supporting a martingale weak solution, the choice of that space may very well depend on $\nu_h, \nu_z$ themselves. We now construct a space supporting all solutions in the anisotropic inviscid limit.\\

Let us fix any two sequences of positive constants $(\nu_h^k)$, $(\nu_z^k)$ such that $(\nu_h^k)$ and $\left(\frac{\nu_z^k}{\nu_h^k} \right)$ converge to zero, as in Theorem \ref{main result no rotation}. By Proposition \ref{basic existence martingale weak}, for each $k$ there exists a filtered probability space $\mathcal{S}^k \coloneqq \left(\tilde{\Omega}^k, \tilde{\mathcal{F}}^k, (\tilde{\mathcal{F}}^k_t), \tilde{\mathbbm{P}}^k \right)$ and a pair $(\tilde{u}^k, \tilde{\mathcal{W}}^k)$ which is a martingale weak solution of (\ref{expanded ito form}) with respect to $\mathcal{S}^k$. We now define the standard infinite dimensional product space $$\Omega:= \bigtimes_{k=0}^\infty \tilde{\Omega}^k, \quad \mathcal{F}:= \bigotimes_{k=0}^\infty \tilde{\mathcal{F}}^k, \quad \mathcal{F}_t:= \bigotimes_{k=0}^\infty \tilde{\mathcal{F}}^k_t, \quad \mathbbm{P}:= \bigtimes_{k=0}^\infty \tilde{\mathbbm{P}}^k$$
    such that $\mathcal{S} \coloneqq \left(\Omega,\mathcal{F},(\mathcal{F}_t), \mathbbm{P}\right)$ is a filtered probability space. Furthermore we introduce the component projections $(\mathscr{P}^k)$, $\mathscr{P}^k: \Omega \rightarrow \tilde{\Omega}^k$ and subsequently define $(u^k, \mathcal{W}^k)$ by $$u^k \coloneqq \tilde{u}^k\mathscr{P}^k, \qquad \mathcal{W}^k \coloneqq \tilde{\mathcal{W}}^k\mathscr{P}^k.$$
By construction, for each $k$, $(u^k, \mathcal{W}^k)$ is a martingale weak solution of (\ref{expanded ito form}) with respect to $\mathcal{S}$. 

\subsection{Proof of the Main Result}

This subsection is dedicated to the proof of Theorem \ref{main result no rotation}.

\begin{proof}[Proof of Theorem \ref{main result no rotation}:]

We choose the filtered probability space $\mathcal{S}$ and martingale weak solutions $(u^k, \mathcal{W}^k)$ as described in Subsection \ref{subs selection martingale weak}. Let us first approach the convergence $(u^k) \rightarrow w$ in $L^2\left(\Omega; L^\infty\left([0,T];L^2_{\sigma} \right) \right)$. Without loss of generality we suppose throughout that $\nu_h^k, \nu_z^k < 1$. Note that
$$\mathbbm{E}\left(\norm{u^k - w}^2_{L^\infty\left([0,T];L^2_{\sigma} \right)}\right) = \mathbbm{E}\left(\norm{\tilde{u}^k\mathscr{P}^k - w}^2_{L^\infty\left([0,T];L^2_{\sigma} \right)}\right) = \tilde{\mathbbm{E}}^k\left(\norm{\tilde{u}^k - w}^2_{L^\infty\left([0,T];L^2_{\sigma} \right)}\right) $$
where $\tilde{\mathbbm{E}}^k$ denotes the expectation over $\tilde{\Omega}^k$ with respect to $\tilde{\mathbbm{P}}^k$. As alluded to, for the computations we will use the Galerkin Approximation specified in Proposition \ref{basic existence martingale weak}. More precisely, there is an approximate sequence of solutions $(\tilde{u}^{k,n}, \tilde{\mathcal{W}}^{k,n})$ satisfying
\begin{align*}
        \tilde{u}^{k,n}_t = \mathcal{P}_nu_0 &- \int_0^t\mathcal{P}_nB(\tilde{u}^{k,n}_s, \tilde{u}^{k,n}_s)\ ds  - \nu^k_{h}\int_0^t \mathcal{P}_nA_{h} \tilde{u}^{k,n}_s\, ds - \nu^k_z\int_0^t \mathcal{P}_nA_z \tilde{u}^{k,n}_s\, ds\\ &+\frac{1}{2}\int_0^t \sum_{i=1}^\infty\mathcal{P}_n\mathcal{P}\tilde{\mathcal{G}}_i^2\tilde{u}^{k,n}_sds -  \int_0^t\mathcal{P}_n\mathcal{P} \tilde{\mathcal{G}} \tilde{u}^{k,n}_s d\tilde{\mathcal{W}}^{k,n}_s
    \end{align*}
whereby $(\tilde{u}^{k,n}) \rightarrow \tilde{u}^k$ in the weak* topology of $L^2\left(\tilde{\Omega}^k; L^\infty\left([0,T];L^2_{\sigma}\right) \right)$ and the weak topology of $L^2\left(\tilde{\Omega}^k; L^2\left([0,T];W^{1,2}_{\sigma}\right) \right)$. Therefore,
\begin{equation} \nonumber \tilde{\mathbbm{E}}^k\left(\norm{\tilde{u}^k - w}^2_{L^\infty\left([0,T];L^2_{\sigma} \right)}\right) \leq \liminf_{n \rightarrow \infty} \tilde{\mathbbm{E}}^k\left(\norm{\tilde{u}^{k,n} - w}^2_{L^\infty\left([0,T];L^2_{\sigma} \right)}\right) 
\end{equation}
and we can see that the computation boils down to treating $\norm{\tilde{u}^{k,n}_t - w_t}^2$. For simplicity in the below, we relabel $\tilde{u}^{k,n}$ by $u$ and remove all $k,n$ superscripts. Let us define
$$v \coloneqq u - w - \mathscr{B}$$
where $\mathscr{B}$ is the boundary corrector from Lemma \ref{masmoudi corrector}. Estimates on $v$ will prove sufficient as $\mathscr{B}$ is of vanishing $L^2$ norm. Observe that $v$ satisfies
\begin{align*}
        v_t = & \,\mathcal{P}_nu_0 - \int_0^t\mathcal{P}_nB(u_s, u_s)\ ds  - \nu_{h}\int_0^t \mathcal{P}_nA_{h} u_s\, ds - \nu_z\int_0^t \mathcal{P}_nA_z u_s\, ds +\frac{1}{2}\int_0^t \sum_{i=1}^\infty\mathcal{P}_n\mathcal{P}\tilde{\mathcal{G}}_i^2u_sds \\ &-  \int_0^t\mathcal{P}_n\mathcal{P} \tilde{\mathcal{G}} u_s d\tilde{\mathcal{W}}_s - u_0 + \int_0^tB(w_s, w_s)\ ds - \mathscr{B}_0 - \int_0^t \partial_t\mathscr{B}_sds
    \end{align*}
which yields the energy identity
\begin{align} \nonumber
        \norm{v_t}^2 = & \,\norm{\mathcal{P}_nu_0 -  u_0  - \mathscr{B}_0}^2 - 2\int_0^t\inner{\mathcal{P}_nB(u_s, u_s)}{v_s} ds  - 2\nu_{h}\int_0^t \inner{\mathcal{P}_nA_{h} u_s}{v_s} ds\\ \nonumber &- 2\nu_z\int_0^t \inner{\mathcal{P}_nA_z u_s}{v_s} ds + 2\int_0^t\inner{B(w_s, w_s)}{v_s} ds  - 2\int_0^t \inner{\partial_t\mathscr{B}_s}{v_s}ds\\ &+\int_0^t \sum_{i=1}^\infty\inner{\mathcal{P}_n\mathcal{P}\tilde{\mathcal{G}}_i^2u_s}{v_s}ds + \int_0^t\sum_{i=1}^\infty \norm{\mathcal{P}_n\mathcal{P}\tilde{\mathcal{G}}_iu_s}^2ds -  2\int_0^t\inner{\mathcal{P}_n\mathcal{P} \tilde{\mathcal{G}} u_s}{v_s} d\tilde{\mathcal{W}}_s.  \label{core energy identity}
    \end{align}
To keep track of all terms, let us write
\begin{equation} \nonumber 
    \norm{v_t}^2 = \sum_{l=0}^8 J_l
\end{equation}
which is the sum of the initial condition plus the eight integrals.
A series of constants will appear in the proof; for convenience, we summarise them and their dependencies here.
\begin{itemize}
    \item $\delta$ is an arbitrary positive parameter to be chosen at the end of the proof, which will be the first parameter fixed.
    \item $\check{c}$ denotes a general non-negative constant changing from line to line, which may depend on $\delta$ but is independent of $\tilde{c}$, $k$ and $n$ below.
    \item $\tilde{c}$ is an arbitrary positive parameter to be chosen at the end of the proof, after $\delta$. The boundary corrector $\mathscr{B}$ is constructed for the choice $\theta = \tilde{c}\nu_h$.
    \item $c$ denotes a general non-negative constant changing from line to line, which may depend on $\delta$ and $\tilde{c}$ but is independent of $k$ and $n$.
    \item $o_k$ denotes a non-negative constant, which may depend on $\delta$ and $\tilde{c}$ but is independent of $n$, such that $o_k$ approaches zero as $k \rightarrow \infty$. $k$ will be chosen large at the end of the proof, after $\delta$ and $\tilde{c}$.
    \item $c_k$ denotes a general non-negative constant changing from line to line, which may depend on $\delta$, $\tilde{c}$ and $k$ but is independent of $n$. In particular, $c_k$ may explode as $k \rightarrow \infty$.
    \item $o_n$ denotes a non-negative constant, which may depend on $\delta$, $\tilde{c}$ and $k$, such that $o_n$ approaches zero as $n \rightarrow \infty$. $n$ will be chosen large at the end of the proof, after $\delta$, $\tilde{c}$ and $n$.
\end{itemize}

As described, the order of selection is $\delta$, $\tilde{c}$, $k$, $n$. Any of the above constants may depend on $T$, the $H^\gamma$ norm of $u_0$, the $L^\infty\left([0,T];H^{\gamma}\right)$ norm of $w$ or the $W^{2,\infty}$ norms of $(\xi_i)$, which will be clear in context. Throughout the proof we shall make frequent use of the bounds given in Lemmas \ref{masmoudi corrector} and \ref{bounds on A} without explicit reference, where in Lemma \ref{masmoudi corrector} the choice is $\theta = \tilde{c}\nu_h $. Notice that all quantities bounded in Lemma \ref{bounds on A} are controlled by the generic $c$ in this proof. Starting with $J_0$,
\begin{align*}
    J_0 \leq 2\norm{\mathcal{P}_nu_0 -  u_0}^2 + 2\norm{\mathscr{B}_0}^2 \leq o_n + 2\norm{\mathscr{M}}_{L^2_z}^2\norm{\mathscr{A}_0}_{L^2_h}^2 \leq o_n + c\left(\nu_h\nu_z \right)^{\frac{1}{2}} = o_n + o_k.
\end{align*}
Moving on to $J_1$, we first deal with the projection $\mathcal{P}_n$. Indeed,
\begin{align*}
    \inner{\mathcal{P}_nB(u_s, u_s)}{v_s} &= \inner{\left(\mathcal{P}_n - I\right)B(u_s, u_s)}{v_s} + \inner{B(u_s, u_s)}{v_s}\\
    &= \inner{B(u_s, u_s)}{\left(\mathcal{P}_n - I\right)(-w_s - \mathscr{B}_s)} + \inner{B(u_s, u_s)}{v_s}
\end{align*}
having expanded out $v_s = u_s - w_s - \mathscr{B}_s$ and used that $(\mathcal{P}_n - I)u_s = 0$. In the first term use that
\begin{align*}
    \abs{\inner{B(u_s, u_s)}{\left(\mathcal{P}_n - I\right)(-w_s - \mathscr{B}_s)}} &\leq \norm{B(u_s, u_s)}_{L^{\frac{6}{5}}}\norm{\left(\mathcal{P}_n - I\right)(w_s + \mathscr{B}_s)}_{L^6}\\ &\leq c\norm{u_s}_{L^3}\norm{u_s}_{H^1}\norm{\left(\mathcal{P}_n - I\right)(w_s + \mathscr{B}_s)}_{H^1}\\ &\leq c\norm{u_s}^{\frac{1}{2}}\norm{u_s}_{H^1}^{\frac{3}{2}}\norm{\left(\mathcal{P}_n - I\right)(w_s + \mathscr{B}_s)}_{H^1}
\end{align*}
having applied the Gagliardo-Nirenberg Inequality $\norm{u_s}_{L^3} \leq c\norm{u_s}^{\frac{1}{2}}\norm{u_s}^{\frac{1}{2}}_{H^1}$. We will return to this term later, and instead move on to $\inner{B(u_s, u_s)}{v_s}$. The approach is to split $u$ into its representation $v - w - \mathscr{B}$, yielding
\begin{align*}
    \inner{B(u_s, u_s)}{v_s} &= \inner{B(u_s, v_s)}{v_s} - \inner{B(u_s, w_s)}{v_s} - \inner{B(u_s, \mathscr{B}_s)}{v_s}\\
    &= - \inner{B(v_s, w_s)}{v_s} + \inner{B(w_s, w_s)}{v_s} + \inner{B(\mathscr{B}_s, w_s)}{v_s}\\ &\qquad \qquad - \inner{B(v_s, \mathscr{B}_s)}{v_s} + \inner{B(w_s, \mathscr{B}_s)}{v_s} + \inner{B(\mathscr{B}_s, \mathscr{B}_s)}{v_s}
\end{align*}
where we have used that $\inner{B(u_s, v_s)}{v_s} = 0$. As these terms come with a minus, note that $\inner{B(w_s, w_s)}{v_s}$ cancels with $J_4$. Therefore, we have established that
\begin{align*}
    &J_1 + J_4 \leq\\ & c\int_0^t\norm{u_s}^{\frac{1}{2}}\norm{u_s}_{H^1}^{\frac{3}{2}}\norm{\left(\mathcal{P}_n - I\right)(w_s + \mathscr{B}_s)}_{H^1}ds\\ & -2 \int_0^t\left(\inner{B(v_s, w_s)}{v_s} + \inner{B(\mathscr{B}_s, w_s)}{v_s} - \inner{B(v_s, \mathscr{B}_s)}{v_s} + \inner{B(w_s, \mathscr{B}_s)}{v_s} + \inner{B(\mathscr{B}_s, \mathscr{B}_s)}{v_s} \right)ds
\end{align*}
and we write the right hand side as $\sum_{l=0}^5J_{1,4}^l$, to be addressed individually. To begin,
$$J^1_{1,4} \leq c\int_0^t\norm{v_s}^2\norm{w_s}_{W^{1,\infty}}ds \leq c\int_0^t\norm{v_s}^2ds $$
and
\begin{align*}
    J^2_{1,4} &\leq c\int_0^t\norm{\mathscr{B}_s}\norm{w_s}_{W^{1,\infty}}\norm{v_s}ds \leq c\int_0^t\norm{\mathscr{M}}_{L^2_z}\norm{\mathscr{A}_s}_{L^2_h}\norm{v_s}ds\\ &\leq  c\int_0^t\left(\tilde{c}\nu_h\nu_z \right)^{\frac{1}{4}}\norm{v_s}ds \leq c\left(\nu_h\nu_z \right)^{\frac{1}{2}} + c\int_0^t\norm{v_s}^2ds = o_k + c\int_0^t\norm{v_s}^2ds
\end{align*}
having used Young's Inequality. Moving on to $J^3_{1,4}$, we decompose
\begin{align*}
   \inner{B(v_s, \mathscr{B}_s)}{v_s}  &= \sum_{j=1}^2\inner{v^j_s\partial_j\mathscr{B}_s}{v_s} + \inner{v^3_s \partial_3\mathscr{B}_s}{v_s}\\
   &= \sum_{l=1}^2\sum_{j=1}^2\inner{v^j_s\partial_j\mathscr{B}^l_s}{v^l_s} + \sum_{j=1}^2\inner{v^j_s\partial_j\mathscr{B}^3_s}{v^3_s} + \sum_{l=1}^2\inner{v^3_s \partial_3\mathscr{B}^l_s}{v^l_s} + \inner{v^3_s \partial_3\mathscr{B}^3_s}{v^3_s}
\end{align*} 
In the first term, we have that
\begin{align*}
\abs{\sum_{l=1}^2\sum_{j=1}^2\inner{v^j_s\partial_j\mathscr{B}^l_s}{v^l_s}} \leq \sum_{l=1}^2\sum_{j=1}^2\norm{\partial_j\mathscr{B}^l_s}_{L^\infty}\norm{v_s}^2 \leq \sum_{l=1}^2\sum_{j=1}^2\norm{\mathscr{M}^h}_{L^\infty_z}\norm{\partial_j\mathscr{A}^l_s}_{L^\infty_h}\norm{v_s}^2 \leq c\norm{v_s}^2.
\end{align*}
whilst in the second term,
\begin{align*}
    \abs{\sum_{j=1}^2\inner{v^j_s\partial_j\mathscr{B}^3_s}{v^3_s}} &= \abs{\sum_{j=1}^2\inner{\mathscr{B}^3_s}{\partial_j\left(v^j_sv^3_s\right)}} \leq \sum_{j=1}^2\left(\abs{\inner{\mathscr{B}^3_s}{\partial_jv^j_sv^3_s}} + \abs{\inner{\mathscr{B}^3_s}{v^j_s\partial_jv^3_s}} \right)\\
    &\leq c\sum_{j=1}^2\norm{\mathscr{B}^3_s}_{L^\infty}\norm{\partial_jv_s}\norm{v_s} \leq c\sum_{j=1}^2\norm{\mathscr{M}^z}_{L^\infty_z}\norm{\mathscr{A}^3_s}_{L^\infty_h}\norm{\partial_jv_s}\norm{v_s}\\
    &\leq c\sum_{j=1}^2 \left(\tilde{c}\nu_h\nu_z\right)^{\frac{1}{2}}\norm{\partial_jv_s}\norm{v_s} \leq \delta\nu_h\nu_z\sum_{j=1}^2 \norm{\partial_jv_s}^2 + c\norm{v_s}^2\\
    &\leq \delta\nu_h\sum_{j=1}^2 \norm{\partial_jv_s}^2 + c\norm{v_s}^2
\end{align*}
observing in the last line that $\nu_z \leq 1$. The third term is the most delicate, for which we will look to invoke Lemma \ref{Hardy Littlewood}. We split the integral into the regions where $\mathscr{B}$ is supported,
$$\sum_{l=1}^2\inner{v^3_s \partial_3\mathscr{B}^l_s}{v^l_s} = \sum_{l=1}^2\left(\inner{v^3_s \partial_3\mathscr{B}^l_s}{v^l_s}_{L^2_{h \times \left(0,\frac{1}{4}\right)}} +  \inner{v^3_s \partial_3\mathscr{B}^l_s}{v^l_s}_{L^2_{h \times \left(\frac{3}{4},1\right)}}\right)$$
and observe that 
\begin{align*}
    &\sum_{l=1}^2\left(\abs{\inner{v^3_s \partial_3\mathscr{B}^l_s}{v^l_s}_{L^2_{h \times \left(0,\frac{1}{4}\right)}}} +  \abs{\inner{v^3_s \partial_3\mathscr{B}^l_s}{v^l_s}_{L^2_{h \times \left(\frac{3}{4},1\right)}}}\right)\\ &\qquad\leq \sum_{l=1}^2\left(\norm{z^2\partial_3\mathscr{B}^l_s}_{L^\infty_{h \times \left(0,\frac{1}{4}\right)}} \norm{\frac{v^3_s}{z}}\norm{\frac{v_s}{z}} + \norm{(1-z)^2\partial_3\mathscr{B}^l_s}_{L^\infty_{h \times \left(\frac{3}{4},1\right)}} \norm{\frac{v^3_s}{1-z}}\norm{\frac{v_s}{1-z}}     \right)\\
    &\qquad\leq \check{c}\sum_{l=1}^2\norm{\partial_3\mathscr{A}^l_s}_{L^\infty_h}\norm{\partial_3v^3_s}\norm{\partial_3v_s}\left(\norm{z^2\partial_3\mathscr{M}^h}_{L^\infty_{\left(0,\frac{1}{4}\right)}} + \norm{(1-z)^2\partial_3\mathscr{M}^h}_{L^\infty_{\left(\frac{3}{4},1\right)}}     \right)\\
    &\qquad\leq \check{c}\left(\tilde{c}\nu_h\nu_z \right)^{\frac{1}{2}}\norm{\partial_3v^3_s}\norm{\partial_3v_s}\\
    &\qquad\leq \check{c}\left(\tilde{c}\nu_h\nu_z \right)^{\frac{1}{2}}\sum_{j=1}^2\norm{\partial_jv_s}\norm{\partial_3v_s}\\
    & \qquad \leq \delta\nu_h\sum_{j=1}^2\norm{\partial_jv_s}^2 + \check{c}\tilde{c}\nu_z \norm{\partial_3v_s}^2
\end{align*}  
having applied Lemma \ref{Hardy Littlewood} as suggested, and crucially using in the penultimate line that $\partial_3v^3 = -\partial_1v^1 - \partial_2v^2$ by the divergence free condition, allowing us to pass to a bound by horizontal derivatives. The remaining term of $J^3_{1,4}$ is simple,
$$ \abs{\inner{v^3_s \partial_3\mathscr{B}^3_s}{v^3_s}} \leq c\norm{\partial_3\mathscr{B}^3_s}_{L^\infty}\norm{v_s}^2 \leq c\norm{\partial_3\mathscr{M}^z}_{L^\infty_z}\norm{\mathscr{A}^3_s}_{L^\infty_h}\norm{v_s}^2 \leq c\norm{v_s}^2$$
producing in total that
$$ J^3_{1,4} \leq 4\delta\nu_h\sum_{j=1}^2\int_0^t\norm{\partial_jv_s}^2ds + 2\check{c}\tilde{c}\nu_z \int_0^t\norm{\partial_3v_s}^2ds + c\int_0^t\norm{v_s}^2ds.$$
Let us move on to $J^4_{1,4}$. We write
$$\inner{B(w_s, \mathscr{B}_s)}{v_s} = \sum_{j=1}^2\inner{w^j_s \partial_j\mathscr{B}_s}{v_s} + \inner{w^3_s \partial_3\mathscr{B}_s}{v_s} $$
and see that
\begin{align*}
    \sum_{j=1}^2\abs{\inner{w^j_s \partial_j\mathscr{B}_s}{v_s}} &\leq \norm{w_s}_{L^\infty}\norm{\partial_j\mathscr{B}_s}\norm{v_s} \leq c\sum_{j=1}^2\norm{\mathscr{M}}_{L^2_z}\norm{\partial_j\mathscr{A}_s }_{L^2_h}\norm{v_s}\\ &\leq c\left(\tilde{c}\nu_h\nu_z\right)^{\frac{1}{4}}\norm{v_s} \leq o_k + c\norm{v_s}^2
\end{align*}
as well as, using Lemma \ref{Hardy Littlewood} and splitting onto the support of $\mathscr{B}$ again,
\begin{align*}
    \abs{\inner{w^3_s \partial_3\mathscr{B}_s}{v_s}} &\leq \abs{\inner{w^3_s \partial_3\mathscr{B}_s}{v_s}_{L^2_{h \times \left(0,\frac{1}{4}\right)}}} + \abs{\inner{w^3_s \partial_3\mathscr{B}_s}{v_s}_{L^2_{h \times \left(\frac{3}{4},1\right)}}}\\
    &\leq \norm{\frac{w^3_s}{z}}_{L^\infty}\norm{z\partial_3\mathscr{B}_s}_{L^2_{h \times \left(0,\frac{1}{4}\right)}}\norm{v_s} + \norm{\frac{w^3_s}{1-z}}_{L^\infty}\norm{(1-z)\partial_3\mathscr{B}_s}_{L^2_{h \times \left(\frac{3}{4},1\right)}}\norm{v_s}\\
    &\leq c\norm{\partial_3w^3_s}_{L^\infty}\norm{\mathscr{A}_s}_{L^2_h}\norm{v_s}\left(\norm{z\partial_3\mathscr{M}}_{L^2_{\left(0,\frac{1}{4}\right)}} + \norm{(1-z)\partial_3\mathscr{M}}_{L^2_{\left(\frac{3}{4},1\right)}} \right)\\
    &\leq c\left(\tilde{c}\nu_h\nu_z\right)^{\frac{1}{4}}\norm{v_s}\\
    &\leq o_k + c\norm{v_s}^2.
\end{align*}
Note that whilst $w$ does not satisfy the Dirichlet boundary condition, it satisfies $w \cdot \underline{n} = 0$ which ensures that $w^3 = 0$ on $\partial \mathscr{O}$, hence the validity of applying Lemma \ref{Hardy Littlewood}. Therefore,
$$ J^4_{1,4} \leq o_k + c\int_0^t\norm{v_s}^2ds.$$
Towards $J^5_{1,4}$, we write
$$\inner{B(\mathscr{B}_s, \mathscr{B}_s)}{v_s} = \sum_{j=1}^2\inner{\mathscr{B}^j_s\partial_j \mathscr{B}_s}{v_s} +\inner{\mathscr{B}^3_s\partial_3 \mathscr{B}_s}{v_s} $$
where
\begin{align*}
    \sum_{j=1}^2\abs{\inner{\mathscr{B}^j_s\partial_j \mathscr{B}_s}{v_s}} &\leq \sum_{j=1}^2\norm{\mathscr{B}^j_s}_{L^\infty}\norm{\partial_j \mathscr{B}_s}\norm{v_s} \leq \sum_{j=1}^2\norm{\mathscr{M}}_{L^\infty_z}\norm{\mathscr{A}^j_s}_{L^\infty_h}\norm{\mathscr{M}}_{L^2_z}\norm{\partial_j\mathscr{A}_s}_{L^2_h}\norm{v_s}\\ &\leq c\left(\tilde{c}\nu_h\nu_z\right)^{\frac{1}{4}}\norm{v_s} \leq o_k + c\norm{v_s}^2
\end{align*}
and
\begin{align*}
    \abs{\inner{\mathscr{B}^3_s\partial_3 \mathscr{B}_s}{v_s}} &\leq \norm{\mathscr{B}^3_s}_{L^\infty}\norm{\partial_3 \mathscr{B}_s}\norm{v_s} \leq \norm{\mathscr{M}^z}_{L^\infty_z}\norm{\mathscr{A}^3_s}_{L^\infty_h}\norm{\partial_3\mathscr{M}}_{L^2_z}\norm{\mathscr{A}_s}_{L^2_h}\norm{v_s}\\
    &\leq c\left(\tilde{c}\nu_h\nu_z\right)^{\frac{1}{2}}\left(\tilde{c}\nu_h\nu_z\right)^{-\frac{1}{4}}\norm{v_s} = c\left(\tilde{c}\nu_h\nu_z\right)^{\frac{1}{4}}\norm{v_s} \leq o_k + c\norm{v_s}^2
\end{align*}
giving that
$$J^5_{1,4} \leq o_k +c\int_0^t\norm{v_s}^2ds. $$
Therefore,
\begin{align*}
    J_1 + J_4 \leq \sum_{l=0}^5J^l_{1,4} &\leq c\int_0^t\norm{u_s}^{\frac{1}{2}}\norm{u_s}_{H^1}^{\frac{3}{2}}\norm{\left(\mathcal{P}_n - I\right)(w_s + \mathscr{B}_s)}_{H^1}ds\\ & \qquad + 4\delta\nu_h\sum_{j=1}^2\int_0^t\norm{\partial_jv_s}^2ds + 2\check{c}\tilde{c}\nu_z \int_0^t\norm{\partial_3v_s}^2ds + o_k + c\int_0^t\norm{v_s}^2ds.
\end{align*}
We proceed with $J_2$, dealing with the projection in a similar way to $J_1$ through
\begin{align*}
    -\inner{\mathcal{P}_nA_hu_s}{v_s} & = -\inner{\left(\mathcal{P}_n - I\right)A_hu_s}{v_s} - \inner{A_hu_s}{v_s}\\
    &\leq \abs{\inner{A_hu_s}{\left(\mathcal{P}_n - I\right)\left(-w_s - \mathscr{B}_s\right)}} - \inner{A_hu_s}{v_s}\\
    &\leq \sum_{j=1}^2\abs{\inner{\partial_ju_s}{\partial_j\left(\mathcal{P}_n - I\right)\left(-w_s - \mathscr{B}_s\right)}} - \inner{A_hu_s}{v_s}\\
    &\leq c\norm{u_s}_{H^1}\norm{\left(\mathcal{P}_n - I\right)\left(w_s + \mathscr{B}_s\right)}_{H^1} - \inner{A_hu_s}{v_s}.
\end{align*}
Again alike $J_1$, we split the second term into
$$-\inner{A_hv_s}{v_s} + \inner{A_hw_s}{v_s} + \inner{A_h\mathscr{B}_s}{v_s}$$
where as usual
$$ -\inner{A_hv_s}{v_s} = -\sum_{j=1}^2\norm{\partial_jv_s}^2.$$
Meanwhile,
\begin{align*}
    \abs{\inner{A_hw_s}{v_s}} \leq \sum_{j=1}^2\abs{\inner{\partial_jw_s}{\partial_jv_s}} \leq \delta \sum_{j=1}^2\norm{\partial_jv_s}^2 + c
\end{align*}
having applied Young's Inequality, and
\begin{align*}
    \abs{\inner{A_h\mathscr{B}_s}{v_s}} &\leq \sum_{j=1}^2\abs{\inner{\partial_j\mathscr{B}_s}{\partial_jv_s}} \leq \sum_{j=1}^2 \norm{\mathscr{M}}_{L^2_z}\norm{\partial_j\mathscr{A}_s}_{L^2_h}\norm{\partial_jv_s}\\ &\leq c\sum_{j=1}^2 \left(\tilde{c}\nu_h\nu_z\right)^{\frac{1}{4}}\norm{\partial_jv_s} \leq \delta \sum_{j=1}^2\norm{\partial_jv_s}^2 + o_k.
\end{align*}
We obtain that
\begin{align*}
    J_2 &\leq c\int_0^t\norm{u_s}_{H^1}\norm{\left(\mathcal{P}_n - I\right)\left(w_s + \mathscr{B}_s\right)}_{H^1}ds - 2\nu_h\left(1-2\delta\right)\sum_{j=1}^2\int_0^t\norm{\partial_jv_s}^2ds + o_k.
\end{align*}
Moving on to $J_3$, by the same argument
$$-\inner{\mathcal{P}_nA_zu_s}{v_s} \leq c\norm{u_s}_{H^1}\norm{\left(\mathcal{P}_n - I\right)\left(w_s + \mathscr{B}_s\right)}_{H^1} - \inner{A_zu_s}{v_s} $$
relying on the fact that $w-\mathscr{B}$ is of zero-trace to conduct the integration by parts. The second term is again split into
$$-\inner{A_zv_s}{v_s} + \inner{A_zw_s}{v_s} + \inner{A_z\mathscr{B}_s}{v_s}$$
where
$$ -\inner{A_zv_s}{v_s} = -\norm{\partial_3v_s}^2.$$
In addition,
\begin{align*}
    \abs{\inner{A_zw_s}{v_s}} = \abs{\inner{\partial_3w_s}{\partial_3v_s}} \leq \delta\norm{\partial_3v_s}^2 + c
\end{align*}
whilst
\begin{align*}
    \abs{\inner{A_z\mathscr{B}_s}{v_s}} &= \abs{\inner{\partial_3\mathscr{B}_s}{\partial_3v_s}} \leq \norm{\partial_3\mathscr{M}}_{L^2_z}\norm{\mathscr{A}_s}_{L^2_h}\norm{\partial_3v_s}\\ &\leq c\left(\tilde{c}\nu_h\nu_z \right)^{-\frac{1}{4}}\norm{\partial_3v_s}
    \leq \delta\norm{\partial_3v_s}^2 + c\left(\nu_h\nu_z \right)^{-\frac{1}{2}}.
\end{align*}
We obtain that
\begin{align*}
    J_3 &\leq c\int_0^t\norm{u_s}_{H^1}\norm{\left(\mathcal{P}_n - I\right)\left(w_s + \mathscr{B}_s\right)}_{H^1}ds - 2\nu_z\left(1-2\delta\right)\sum_{j=1}^2\int_0^t\norm{\partial_jv_s}^2ds + o_k
\end{align*}
due to the fact that
$$\nu_z \left(\nu_h\nu_z \right)^{-\frac{1}{2}} = \left(\frac{\nu_z}{\nu_h} \right)^{\frac{1}{2}} = o_k.$$
Having addressed $J_4$ we turn to $J_5$, which we estimate by
\begin{align*}
    J_5 &\leq 2\int_0^t\abs{\inner{\partial_t\mathscr{B}_s}{v_s}}ds \leq c\int_0^t\norm{\mathscr{M}}_{L^2_z}\norm{\partial_t\mathscr{A}_s}_{L^2_h}\norm{v_s}ds\\
    &\leq c\int_0^t\left(\tilde{c}\nu_h\nu_z\right)^{\frac{1}{4}}\norm{v_s}ds  \leq o_k + c\int_0^t\norm{v_s}^2ds.
\end{align*}
It is time to control the contributions from $\tilde{\mathcal{G}}$, starting with $J_6$ which begins in a familiar way with 
\begin{align*}
    \inner{\mathcal{P}_n\mathcal{P}\tilde{\mathcal{G}}_i^2u_s}{v_s} & = \inner{\left(\mathcal{P}_n - I\right)\mathcal{P}\tilde{\mathcal{G}}_i^2u_s}{v_s} + \inner{\mathcal{P}\tilde{\mathcal{G}}_i^2u_s}{v_s}\\
    &\leq \abs{\inner{\mathcal{P}\tilde{\mathcal{G}}_i^2u_s}{\left(\mathcal{P}_n - I\right)\left(-w_s - \mathscr{B}_s\right)}} + \inner{\mathcal{P}\tilde{\mathcal{G}}_i^2u_s}{v_s}\\
    &\leq \abs{\inner{\tilde{\mathcal{G}}_iu_s}{\tilde{\mathcal{G}}_i^*\left(\mathcal{P}_n - I\right)\left(-w_s - \mathscr{B}_s\right)}} + \inner{\mathcal{P}\tilde{\mathcal{G}}_i^2u_s}{v_s}\\
    &\leq c\norm{\tilde{\xi_i}}_{W^{2,\infty}}^2\norm{u_s}_{H^1}\norm{\left(\mathcal{P}_n - I\right)\left(w_s + \mathscr{B}_s\right)}_{H^1} + \inner{\tilde{\mathcal{G}}_i^2u_s}{v_s}\\
    &\leq c\norm{\xi_i}_{W^{2,\infty}}^2\norm{u_s}_{H^1}\norm{\left(\mathcal{P}_n - I\right)\left(w_s + \mathscr{B}_s\right)}_{H^1} + \inner{\tilde{\mathcal{G}}_i^2u_s}{v_s}
\end{align*}
where we have used the coarse bound that $\nu_h, \nu_z < 1$. In fact to treat the second term we will combine $J_6$ with $J_7$, first using the simple bound
$$\norm{\mathcal{P}_n\mathcal{P}\tilde{\mathcal{G}}_iu_s}^2 \leq \norm{\tilde{\mathcal{G}}_iu_s}^2 $$
by the orthogonal projections. Once more we substitute in $u = v - w - \mathscr{B}$, obtaining that
\begin{align*}
 &\inner{\tilde{\mathcal{G}}_i^2u_s}{v_s} + \norm{\tilde{\mathcal{G}}_iu_s}^2\\ &  = \inner{\tilde{\mathcal{G}}_i^2\left(v_s - w_s - \mathscr{B}_s \right)}{v_s}   + \inner{\tilde{\mathcal{G}}_i(v_s - w_s - \mathscr{B}_s )}{\tilde{\mathcal{G}}_iu_s}\\
 & = \inner{\tilde{\mathcal{G}}_i^2 v_s }{v_s} - \inner{\tilde{\mathcal{G}}_i^2w_s }{v_s} - \inner{\tilde{\mathcal{G}}_i^2\mathscr{B}_s}{v_s} + \inner{\tilde{\mathcal{G}}_iv_s }{\tilde{\mathcal{G}}_iu_s} - \inner{\tilde{\mathcal{G}}_iw_s }{\tilde{\mathcal{G}}_iu_s} - \inner{\tilde{\mathcal{G}}_i\mathscr{B}_s }{\tilde{\mathcal{G}}_iu_s}\\
 & = \inner{\tilde{\mathcal{G}}_i^2 v_s }{v_s} - \inner{\tilde{\mathcal{G}}_i^2w_s }{v_s} - \inner{\tilde{\mathcal{G}}_i^2\mathscr{B}_s}{v_s} + \inner{\tilde{\mathcal{G}}_iv_s }{\tilde{\mathcal{G}}_iv_s} - \inner{\tilde{\mathcal{G}}_iv_s }{\tilde{\mathcal{G}}_iw_s} - \inner{\tilde{\mathcal{G}}_iv_s }{\tilde{\mathcal{G}}_i\mathscr{B}_s}\\
 & \, - \inner{\tilde{\mathcal{G}}_iw_s }{\tilde{\mathcal{G}}_iv_s} + \inner{\tilde{\mathcal{G}}_iw_s }{\tilde{\mathcal{G}}_iw_s} + \inner{\tilde{\mathcal{G}}_iw_s }{\tilde{\mathcal{G}}_i\mathscr{B}_s} - \inner{\tilde{\mathcal{G}}_i\mathscr{B}_s }{\tilde{\mathcal{G}}_iv_s} + \inner{\tilde{\mathcal{G}}_i\mathscr{B}_s }{\tilde{\mathcal{G}}_iw_s} + \inner{\tilde{\mathcal{G}}_i\mathscr{B}_s }{\tilde{\mathcal{G}}_i\mathscr{B}_s}\\
 &= \left(\inner{\tilde{\mathcal{G}}_i^2 v_s }{v_s} + \norm{\tilde{\mathcal{G}}_iv_s }^2\right) - \inner{\tilde{\mathcal{G}}_i^2w_s }{v_s} - \inner{\tilde{\mathcal{G}}_i^2\mathscr{B}_s}{v_s} - 2\inner{\tilde{\mathcal{G}}_iv_s }{\tilde{\mathcal{G}}_iw_s} - 2\inner{\tilde{\mathcal{G}}_iv_s }{\tilde{\mathcal{G}}_i\mathscr{B}_s}\\
 &\qquad +2\inner{\tilde{\mathcal{G}}_iw_s }{\tilde{\mathcal{G}}_i\mathscr{B}_s} + \norm{\tilde{\mathcal{G}}_iw_s }^2 + \norm{\tilde{\mathcal{G}}_i\mathscr{B}_s }^2.
\end{align*}
As before, we denote the corresponding infinite sum and integrals of these terms by $\sum_{l=1}^8J_{6,7}^l$. In $J^1_{6,7}$ we apply (\ref{bigbound1NEW}) to obtain that
$$J^1_{6,7} \leq c\int_0^t\norm{v_s}^2ds + \delta \nu_h\sum_{j=1}^2\int_0^t\norm{\partial_jv_s}^2ds + \delta \nu_z\int_0^t\norm{\partial_3v_s}^2ds.$$
In $J^2_{6,7}$ we write
\begin{align*}
    \inner{\tilde{\mathcal{G}}_i^2w_s }{v_s} &= \inner{\tilde{\mathcal{G}}_iw_s }{\tilde{\mathcal{G}}_i^*v_s}\\ &= \inner{\left(\tilde{\mathcal{G}}_i^h + \tilde{\mathcal{G}}_i^z\right)w_s }{\left(\tilde{\mathcal{G}}_i^{h,*} + \tilde{\mathcal{G}}_i^{z,*}\right) v_s}\\ &= \inner{\tilde{\mathcal{G}}_i^h w_s }{\tilde{\mathcal{G}}_i^{h,*}v_s} + \inner{\tilde{\mathcal{G}}_i^hw_s }{ \tilde{\mathcal{G}}_i^{z,*} v_s} + \inner{\tilde{\mathcal{G}}_i^zw_s }{\tilde{\mathcal{G}}_i^{h,*} v_s} + \inner{\tilde{\mathcal{G}}_i^zw_s }{\tilde{\mathcal{G}}_i^{z,*} v_s}.
\end{align*}
In the following we shall make frequent use of Lemma \ref{bounds on split noise} without explicit reference on each occasion. Firstly,
\begin{align*}
 \sum_{i=1}^\infty \abs{\inner{\tilde{\mathcal{G}}_i^h w_s }{\tilde{\mathcal{G}}_i^{h,*}v_s}} &\leq c\sum_{i=1}^\infty \norm{\tilde{\xi}^h_i}_{W^{1,\infty}}^2\left(\sum_{j=1}^2\norm{\partial_jw_s} + \norm{w_s} \right)\left(\sum_{j=1}^2\norm{\partial_jv_s} + \norm{v_s} \right)\\
 &\leq c\nu_h\left(\sum_{j=1}^2\norm{\partial_jv_s} + \norm{v_s} \right)\\
 &\leq \nu_h\left(\delta \sum_{j=1}^2\norm{\partial_jv_s}^2 + c\norm{v_s}^2 + c \right)\\
 &\leq \delta \nu_h\sum_{j=1}^2\norm{\partial_jv_s}^2 + c\norm{v_s}^2 + o_k.
\end{align*}
In addition,
\begin{align*}
    \sum_{i=1}^\infty \abs{\inner{\tilde{\mathcal{G}}_i^hw_s }{ \tilde{\mathcal{G}}_i^{z,*} v_s}} &\leq c\sum_{i=1}^\infty\norm{\tilde{\xi}^h_i}_{W^{1,\infty}}\norm{\tilde{\xi}^z_i}_{W^{1,\infty}} \left(\sum_{j=1}^2\norm{\partial_jw_s} + \norm{w_s} \right)\left(\norm{\partial_3v_s} + \norm{v_s} \right)\\
    &\leq c(\nu_h\nu_z)^{\frac{1}{2}}\left(\norm{\partial_3v_s} + \norm{v_s} \right)\\
    &\leq \nu_h^{\frac{1}{2}}\left(\delta\nu_z\norm{\partial_3v_s}^2 + c\norm{v_s}^2 + c\right)\\
    &\leq \delta \nu_z\norm{\partial_3v_s}^2 + c\norm{v_s}^2 + o_k
\end{align*}
where on this occasion we applied Young's Inequality by grouping $\nu_z^{\frac{1}{2}}\norm{\partial_3v_s}$ together. An identical computation provides
$$\sum_{i=1}^\infty \abs{\inner{\tilde{\mathcal{G}}_i^zw_s }{ \tilde{\mathcal{G}}_i^{h,*} v_s}} \leq \delta \nu_h\sum_{j=1}^2\norm{\partial_jv_s}^2 + c\norm{v_s}^2 + o_k$$
and following the same steps as the previous term,
$$\sum_{i=1}^\infty \abs{\inner{\tilde{\mathcal{G}}_i^zw_s }{ \tilde{\mathcal{G}}_i^{z,*} v_s}} \leq \delta \nu_z\norm{\partial_3v_s}^2 + c\norm{v_s}^2 + o_k.$$
Altogether,
$$J^2_{6,7} \leq 2\delta\nu_h\sum_{j=1}^2\int_0^t\norm{\partial_jv_s}^2ds + 2\delta\nu_z\int_0^t\norm{\partial_3v_s}^2ds + c\int_0^t\norm{v_s}^2ds + o_k.$$
In $J^3_{6,7}$ we use the same decomposition
\begin{align*}
    \inner{\tilde{\mathcal{G}}_i^2\mathscr{B}_s }{v_s} = \inner{\tilde{\mathcal{G}}_i^h \mathscr{B}_s }{\tilde{\mathcal{G}}_i^{h,*}v_s} + \inner{\tilde{\mathcal{G}}_i^h\mathscr{B}_s }{ \tilde{\mathcal{G}}_i^{z,*} v_s} + \inner{\tilde{\mathcal{G}}_i^z\mathscr{B}_s }{\tilde{\mathcal{G}}_i^{h,*} v_s} + \inner{\tilde{\mathcal{G}}_i^z\mathscr{B}_s }{\tilde{\mathcal{G}}_i^{z,*} v_s}.
\end{align*}
For the first term,
\begin{align*}
    \sum_{i=1}^\infty \abs{\inner{\tilde{\mathcal{G}}_i^h \mathscr{B}_s }{\tilde{\mathcal{G}}_i^{h,*}v_s}} &\leq c\sum_{i=1}^\infty \norm{\tilde{\xi}^h_i}_{W^{1,\infty}}^2\left(\sum_{j=1}^2\norm{\partial_j\mathscr{B}_s} + \norm{\mathscr{B}_s} \right)\left(\sum_{j=1}^2\norm{\partial_jv_s} + \norm{v_s} \right)\\
    &\leq c\nu_h\norm{\mathscr{M}_s}_{L^2_z}\left(\sum_{j=1}^2\norm{\partial_j\mathscr{A}_s}_{L^2_h} + \norm{\mathscr{A}_s}_{L^2_h} \right)\left(\sum_{j=1}^2\norm{\partial_jv_s} + \norm{v_s} \right)\\
    &\leq c\nu_h\left(\sum_{j=1}^2\norm{\partial_jv_s} + \norm{v_s} \right)\\
    &\leq \delta \nu_h\sum_{j=1}^2\norm{\partial_jv_s}^2 + c\norm{v_s}^2 + o_k
\end{align*}
ignoring the useful decay on $\mathscr{M}$ as it is not necessary here. Doing the same again, following $J^2_{6,7}$ as well,
$$ \sum_{i=1}^\infty \abs{\inner{\tilde{\mathcal{G}}_i^h \mathscr{B}_s }{\tilde{\mathcal{G}}_i^{z,*}v_s}} \leq \delta \nu_z\norm{\partial_3v_s}^2 + c\norm{v_s}^2 + o_k.$$
We will have to be much more precise when taking $\partial_3$ derivatives of $\mathscr{B}$. Indeed,
\begin{align*}
    \sum_{i=1}^\infty \abs{\inner{\tilde{\mathcal{G}}_i^z \mathscr{B}_s }{\tilde{\mathcal{G}}_i^{h,*}v_s}} &\leq c\sum_{i=1}^\infty\norm{\tilde{\xi}^h_i}_{W^{1,\infty}}\norm{\tilde{\xi}^z_i}_{W^{1,\infty}} \left(\norm{\partial_3\mathscr{B}_s} + \norm{\mathscr{B}_s} \right)\left(\sum_{j=1}^2\norm{\partial_jv_s} + \norm{v_s} \right)\\
    &\leq c(\nu_h\nu_z)^{\frac{1}{2}}\norm{\mathscr{A}_s}_{L^2_h}\left(\norm{\partial_3\mathscr{M}_s}_{L^2_z} + \norm{\mathscr{M}_s}_{L^2_z}\right)\left(\sum_{j=1}^2\norm{\partial_jv_s} + \norm{v_s} \right)\\
    &\leq c(\nu_h\nu_z)^{\frac{1}{2}}\left(\nu_h\nu_z \right)^{-\frac{1}{4}}\left(\sum_{j=1}^2\norm{\partial_jv_s} + \norm{v_s} \right)\\
    &= c\nu_h^{\frac{1}{2}}\left(\frac{\nu_z}{\nu_h} \right)^{\frac{1}{4}}\left(\sum_{j=1}^2\norm{\partial_jv_s} + \norm{v_s} \right)\\
    &\leq \left(\frac{\nu_z}{\nu_h} \right)^{\frac{1}{4}}\left(\delta \nu_h\sum_{j=1}^2\norm{\partial_jv_s}^2 + c\norm{v_s}^2 + c\right)\\ 
    &\leq \delta \nu_h\sum_{j=1}^2\norm{\partial_jv_s}^2 + c\norm{v_s}^2 + o_k.
\end{align*}
In the last term,
\begin{align*}
    \sum_{i=1}^\infty \abs{\inner{\tilde{\mathcal{G}}_i^z \mathscr{B}_s }{\tilde{\mathcal{G}}_i^{z,*}v_s}} &\leq c\sum_{i=1}^\infty\norm{\tilde{\xi}^z_i}_{W^{1,\infty}}^2 \left(\norm{\partial_3\mathscr{B}_s} + \norm{\mathscr{B}_s} \right)\left(\norm{\partial_3v_s} + \norm{v_s} \right)\\
    &\leq c\nu_z \norm{\mathscr{A}_s}_{L^2_h}\left(\norm{\partial_3\mathscr{M}_s}_{L^2_z} + \norm{\mathscr{M}_s}_{L^2_z}\right)\left(\norm{\partial_3v_s} + \norm{v_s} \right)\\
    &\leq c\nu_z\left(\nu_h\nu_z \right)^{-\frac{1}{4}}\left(\norm{\partial_3v_s} + \norm{v_s} \right)\\
    &= c\nu_z^{\frac{1}{2}}\left(\frac{\nu_z}{\nu_h} \right)^{\frac{1}{4}}\left(\norm{\partial_3v_s} + \norm{v_s} \right)\\
    &\leq \left(\frac{\nu_z}{\nu_h} \right)^{\frac{1}{4}}\left(\delta \nu_z\norm{\partial_3v_s}^2 + c\norm{v_s}^2 + c\right)\\ 
    &\leq \delta \nu_z\norm{\partial_3v_s}^2 + c\norm{v_s}^2 + o_k.
\end{align*}
Summing these terms,
$$J^3_{6,7} \leq 2\delta\nu_h\sum_{j=1}^2\int_0^t\norm{\partial_jv_s}^2ds + 2\delta\nu_z\int_0^t\norm{\partial_3v_s}^2ds + c\int_0^t\norm{v_s}^2ds + o_k. $$
Moreover, the integrals $J^4_{6,7}$ and $J^5_{6,7}$ are treated identically to $J^2_{6,7}$ and $J^3_{6,7}$ respectively, simply replacing $\tilde{\mathcal{G}}_i^*$ by $\tilde{\mathcal{G}}_i$ whose decomposition satisfies the same bounds. Therefore,
\begin{align*}
    J^4_{6,7} &\leq 4\delta\nu_h\sum_{j=1}^2\int_0^t\norm{\partial_jv_s}^2ds + 4\delta\nu_z\int_0^t\norm{\partial_3v_s}^2ds + c\int_0^t\norm{v_s}^2ds + o_k,\\
    J^5_{6,7} &\leq 4\delta\nu_h\sum_{j=1}^2\int_0^t\norm{\partial_jv_s}^2ds + 4\delta\nu_z\int_0^t\norm{\partial_3v_s}^2ds + c\int_0^t\norm{v_s}^2ds + o_k.
\end{align*}
Next up is $J^6_{6,7}$, which follows the same decomposition
\begin{align*}
    \inner{\tilde{\mathcal{G}}_iw_s }{\tilde{\mathcal{G}}_i\mathscr{B}_s} = \inner{\tilde{\mathcal{G}}_i^h w_s }{\tilde{\mathcal{G}}_i^{h}\mathscr{B}_s} + \inner{\tilde{\mathcal{G}}_i^hw_s }{ \tilde{\mathcal{G}}_i^{z} \mathscr{B}_s} + \inner{\tilde{\mathcal{G}}_i^zw_s }{\tilde{\mathcal{G}}_i^{h} \mathscr{B}_s} + \inner{\tilde{\mathcal{G}}_i^zw_s }{\tilde{\mathcal{G}}_i^{z} \mathscr{B}_s}.
\end{align*}
Firstly,
\begin{align*}
    \sum_{i=1}^\infty \abs{\inner{\tilde{\mathcal{G}}_i^{h}w_s}{\tilde{\mathcal{G}}_i^h \mathscr{B}_s }} &\leq c\sum_{i=1}^\infty \norm{\tilde{\xi}^h_i}_{W^{1,\infty}}^2\left(\sum_{j=1}^2\norm{\partial_jw_s} + \norm{w_s} \right)\left(\sum_{j=1}^2\norm{\partial_j\mathscr{B}_s} + \norm{\mathscr{B}_s} \right)\\
    &\leq c\nu_h\left(\sum_{j=1}^2\norm{\partial_jv_s} + \norm{v_s} \right)\norm{\mathscr{M}_s}_{L^2_z}\left(\sum_{j=1}^2\norm{\partial_j\mathscr{A}_s}_{L^2_h} + \norm{\mathscr{A}_s}_{L^2_h} \right)\\
    &= o_k.
\end{align*}
Similarly easy is the third term, for which
$$\sum_{i=1}^\infty \abs{\inner{\tilde{\mathcal{G}}_i^{z}w_s}{\tilde{\mathcal{G}}_i^h \mathscr{B}_s }} = o_k.$$
In the second term,
\begin{align*}
    \sum_{i=1}^\infty \abs{\inner{\tilde{\mathcal{G}}_i^{h}w_s}{\tilde{\mathcal{G}}_i^z \mathscr{B}_s }} &\leq c\sum_{i=1}^\infty\norm{\tilde{\xi}^h_i}_{W^{1,\infty}}\norm{\tilde{\xi}^z_i}_{W^{1,\infty}} \left(\sum_{j=1}^2\norm{\partial_jw_s} + \norm{w_s} \right)\left(\norm{\partial_3\mathscr{B}_s} + \norm{\mathscr{B}_s} \right)\\
    &\leq c(\nu_h\nu_z)^{\frac{1}{2}}\norm{\mathscr{A}_s}_{L^2_h}\left(\norm{\partial_3\mathscr{M}_s}_{L^2_z} + \norm{\mathscr{M}_s}_{L^2_z}\right)\\
    &\leq c(\nu_h\nu_z)^{\frac{1}{2}}\left(\nu_h\nu_z \right)^{-\frac{1}{4}}\\
    &= o_k
\end{align*}
whilst in the fourth,
\begin{align*}
    \sum_{i=1}^\infty \abs{\inner{\tilde{\mathcal{G}}_i^{z}w_s}{\tilde{\mathcal{G}}_i^z \mathscr{B}_s }} &\leq c\sum_{i=1}^\infty\norm{\tilde{\xi}^z_i}_{W^{1,\infty}}^2\left(\norm{\partial_3w_s} + \norm{w_s} \right)\left(\norm{\partial_3\mathscr{B}_s} + \norm{\mathscr{B}_s} \right)\\
    &\leq c\nu_z\norm{\mathscr{A}_s}_{L^2_h}\left(\norm{\partial_3\mathscr{M}_s}_{L^2_z} + \norm{\mathscr{M}_s}_{L^2_z}\right)\\
    &\leq c\nu_z\left(\nu_h\nu_z \right)^{-\frac{1}{4}}\\
    &= c\nu_z^{\frac{1}{2}}\left(\frac{\nu_z}{\nu_h} \right)^{\frac{1}{4}}\\
    &= o_k.
\end{align*}
Simply,
$$ J^6_{6,7} \leq o_k.$$
$J^7_{6,7}$ is almost immediate,
$$J^7_{6,7} \leq c\int_0^t\sum_{i=1}^\infty \norm{\tilde{\xi_i}}_{W^{1,\infty}}^2\norm{w_s}_{H^1}^2ds \leq c(\nu_h + \nu_z) = o_k.  $$
The final term, $J^8_{6,7}$, is decomposed as usual into
$$\norm{\tilde{\mathcal{G}}_i\mathscr{B}_s }^2 = \norm{\tilde{\mathcal{G}}_i^h \mathscr{B}_s }^2 + 2\inner{\tilde{\mathcal{G}}_i^h\mathscr{B}_s }{ \tilde{\mathcal{G}}_i^{z} \mathscr{B}_s} + \norm{\tilde{\mathcal{G}}_i^z\mathscr{B}_s }^2.$$
We have that
\begin{align*}
    \sum_{i=1}^\infty \norm{\tilde{\mathcal{G}}_i^h \mathscr{B}_s }^2 &\leq c\sum_{i=1}^\infty \norm{\tilde{\xi}^h_i}_{W^{1,\infty}}^2\left(\sum_{j=1}^2\norm{\partial_j\mathscr{B}_s} + \norm{\mathscr{B}_s} \right)^2\\
    &\leq c\nu_h\norm{\mathscr{M}_s}_{L^2_z}^2\left(\sum_{j=1}^2\norm{\partial_j\mathscr{A}_s}_{L^2_h} + \norm{\mathscr{A}_s}_{L^2_h} \right)^2\\
    &= o_k
\end{align*}
whilst
\begin{align*}
    \sum_{i=1}^\infty \abs{\inner{\tilde{\mathcal{G}}_i^h \mathscr{B}_s }{\tilde{\mathcal{G}}_i^{z}\mathscr{B}_s}} &\leq c\sum_{i=1}^\infty \norm{\tilde{\xi}^h_i}_{W^{1,\infty}}\norm{\tilde{\xi}^z_i}_{W^{1,\infty}}\left(\sum_{j=1}^2\norm{\partial_j\mathscr{B}_s} + \norm{\mathscr{B}_s} \right)\left(\norm{\partial_3\mathscr{B}_s} + \norm{\mathscr{B}_s} \right)\\
    &\leq c(\nu_h\nu_z)^{\frac{1}{2}}\norm{\mathscr{M}_s}_{L^2_z}\left(\norm{\partial_3\mathscr{M}_s}_{L^2_z} + \norm{\mathscr{M}_s}_{L^2_z}\right)\\
    &\leq c(\nu_h\nu_z)^{\frac{1}{2}}(\nu_h\nu_z)^{\frac{1}{4}}(\nu_h\nu_z)^{-\frac{1}{4}}\\
    &= o_k
\end{align*}
and lastly,
\begin{align*}
    \sum_{i=1}^\infty \norm{\tilde{\mathcal{G}}_i^z \mathscr{B}_s }^2 &\leq c\sum_{i=1}^\infty \norm{\tilde{\xi}^z_i}_{W^{1,\infty}}^2\left(\norm{\partial_3\mathscr{B}_s} + \norm{\mathscr{B}_s} \right)^2\\
    &\leq c\nu_z\norm{\mathscr{A}_s}_{L^2_h}^2\left(\norm{\partial_3\mathscr{M}_s}_{L^2_z} + \norm{\mathscr{M}_s}_{L^2_z} \right)^2\\
    &\leq c\nu_z(\nu_h\nu_z)^{-\frac{1}{2}}\\
    &= c\left(\frac{\nu_z}{\nu_h}\right)^{\frac{1}{2}}\\
    &= o_k.
\end{align*}
Therefore,
$$ J^8_{6,7} \leq o_k$$
so summing over all terms,
\begin{align*}
    J_6 + J_7 &\leq c\int_0^t\norm{u_s}_{H^1}\norm{\left(\mathcal{P}_n - I\right)\left(w_s + \mathscr{B}_s\right)}_{H^1}ds + \sum_{l=1}^8J^l_{6,7}\\
&\leq \int_0^t\norm{u_s}_{H^1}\norm{\left(\mathcal{P}_n - I\right)\left(w_s + \mathscr{B}_s\right)}_{H^1}ds\\ &\qquad + 14\delta\nu_h\sum_{j=1}^2\int_0^t\norm{\partial_jv_s}^2ds + 14\delta\nu_z\int_0^t\norm{\partial_3v_s}^2ds + c\int_0^t\norm{v_s}^2ds + o_k.
\end{align*} 
Only $J_8$ remains to be treated, however this will require first taking the supremum and expectation and then applying the Burkholder-Davis-Gundy Inequality. Before doing this, we collect $J_0$ to $J_7$. Indeed,
\begin{align*}
    \sum_{l=0}^7J_l &\leq c\int_0^t\left(1 +\norm{u_s}^{\frac{1}{2}}\right)\left(1 + \norm{u_s}_{H^1}^{\frac{3}{2}}\right)\norm{\left(\mathcal{P}_n - I\right)(w_s + \mathscr{B}_s)}_{H^1}ds\\ &  -2\nu_h(1-11\delta)\sum_{j=1}^2\int_0^t\norm{\partial_jv_s}^2ds - 2\nu_z(1 - 9\delta -\check{c}\tilde{c}) \int_0^t\norm{\partial_3v_s}^2ds + c\int_0^t\norm{v_s}^2ds + o_k + o_n.
\end{align*}
Substituting this back into (\ref{core energy identity}), we obtain that
\begin{align} \nonumber
    \norm{v_t}^2 &+ 2\nu_h(1-11\delta)\sum_{j=1}^2\int_0^t\norm{\partial_jv_s}^2ds + 2\nu_z(1 - 9\delta -\check{c}\tilde{c}) \int_0^t\norm{\partial_3v_s}^2ds\\ \nonumber &\leq c\int_0^t\left(1 +\norm{u_s}^{\frac{1}{2}}\right)\left(1 + \norm{u_s}_{H^1}^{\frac{3}{2}}\right)\norm{\left(\mathcal{P}_n - I\right)(w_s + \mathscr{B}_s)}_{H^1}ds\\ & \qquad + c\int_0^t\norm{v_s}^2ds -2\int_0^t\inner{\mathcal{P}_n\mathcal{P} \tilde{\mathcal{G}} u_s}{v_s} d\tilde{\mathcal{W}}_s+ o_k + o_n. \nonumber
\end{align}
We now choose the parameter $\delta$ followed by $\tilde{c}$, to be
$$\delta \coloneqq \frac{1}{22}, \qquad \tilde{c} = \frac{1}{11\check{c}}$$
giving that $1 - 11\delta = 1 - 9\delta - \check{c}\tilde{c} = \frac{1}{2}$. Parameters now fixed, taking the supremum over $t \in [0,t^*]$ for any $t^*\in (0,T]$ followed by expectation, we arrive at 
\begin{align} \nonumber
    &\tilde{\mathbbm{E}}\left(\sup_{t \in [0,t^*]}\norm{v_t}^2 + \nu_h\sum_{j=1}^2\int_0^{t^*}\norm{\partial_jv_s}^2ds + \nu_z \int_0^{t^*}\norm{\partial_3v_s}^2ds\right)\\ \nonumber & \qquad \leq c\tilde{\mathbbm{E}}\left[\int_0^{t^*}\left(1 +\norm{u_s}^{\frac{1}{2}}\right)\left(1 + \norm{u_s}_{H^1}^{\frac{3}{2}}\right)\norm{\left(\mathcal{P}_n - I\right)(w_s + \mathscr{B}_s)}_{H^1}ds\right]\\ & \qquad \qquad + c\tilde{\mathbbm{E}}\left(\int_0^{t^*}\norm{v_s}^2ds\right) +c \tilde{\mathbbm{E}}\left(\int_0^{t^*} \sum_{i=1}^\infty \inner{\mathcal{P}_n\mathcal{P} \tilde{\mathcal{G}}_i u_s}{v_s}^2 ds\right)^{\frac{1}{2}} + o_k + o_n \label{new core energy ineq}
\end{align}
having immediately applied the Burkholder-Davis-Gundy Inequality. Let us now address the first term on the right hand side. By applying H\"{o}lder's Inequality over the product space, we have that
\begin{align*}
    &\tilde{\mathbbm{E}}\left[\int_0^{t^*}\left(1 +\norm{u_s}^{\frac{1}{2}}\right)\left(1 + \norm{u_s}_{H^1}^{\frac{3}{2}}\right)\norm{\left(\mathcal{P}_n - I\right)(w_s + \mathscr{B}_s)}_{H^1}ds\right]\\ &  \leq c\left(\tilde{\mathbbm{E}}\int_0^{t^*}\left(1 + \norm{u_s}_{H^1}^{2}\right) ds \right)^{\frac{3}{4}}\left(\tilde{\mathbbm{E}}\int_0^{t^*}\left(1 + \norm{u_s}^{2}\right)\norm{\left(\mathcal{P}_n - I\right)(w_s + \mathscr{B}_s)}_{H^1}^4 ds \right)^{\frac{1}{4}}\\
    & \leq c\left(\tilde{\mathbbm{E}}\int_0^{t^*}\left(1 + \norm{u_s}_{H^1}^{2}\right) ds \right)^{\frac{3}{4}}\left(\tilde{\mathbbm{E}}\left[\sup_{s \in [0,t^*]}\left(1 + \norm{u_s}^{2}\right) \right]\right)^{\frac{1}{4}}\left(\int_0^{t^*}\norm{\left(\mathcal{P}_n - I\right)(w_s + \mathscr{B}_s)}_{H^1}^4 ds \right)^{\frac{1}{4}}.
\end{align*}
As is now quite standard, see for example [\cite{goodair2025zero}] Proposition 2.1 for this choice of noise in the isotropic case, we have the bounds
$$ \tilde{\mathbbm{E}}\left(\sup_{s \in [0,T]}\norm{u_s}^{2} + \int_0^{T}\norm{u_r}_{H^1}^2dr \right) \leq c_k$$
which we recall is independent of $n$, but may explode as $k \rightarrow \infty$. By construction of the boundary corrector $\mathscr{B}$ we have that $w_s+\mathscr{B}_s \in W^{1,2}_{\sigma}$, and as $(\mathcal{P}_n)$ are orthogonal projections in this space then for each fixed $k$, $\norm{\left(\mathcal{P}_n - I\right)(w_s + \mathscr{B}_s)}_{H^1}$ is monotone decreasing to zero as $n \rightarrow \infty$. As the integrals are well defined for each $n$, by the Monotone Convergence Theorem we conclude that
$$\left(\int_0^{t^*}\norm{\left(\mathcal{P}_n - I\right)(w_s + \mathscr{B}_s)}_{H^1}^4 ds \right)^{\frac{1}{4}} \leq o_n$$
so we update (\ref{new core energy ineq}) to 
\begin{align} \nonumber
    &\tilde{\mathbbm{E}}\left(\sup_{t \in [0,t^*]}\norm{v_t}^2 + \nu_h\sum_{j=1}^2\int_0^{t^*}\norm{\partial_jv_s}^2ds + \nu_z \int_0^{t^*}\norm{\partial_3v_s}^2ds\right)\\ &\qquad \leq c\tilde{\mathbbm{E}}\left(\int_0^{t^*}\norm{v_s}^2ds\right) +c \tilde{\mathbbm{E}}\left(\int_0^{t^*} \sum_{i=1}^\infty \inner{\mathcal{P}_n\mathcal{P} \tilde{\mathcal{G}}_i u_s}{v_s}^2 ds\right)^{\frac{1}{2}} + o_k + c_ko_n. \label{new core energy ineq 2}
\end{align}
    Note that whilst $o_n$ is allowed to depend on $k$, so $c_ko_n = o_n$, we write $c_ko_n$ simply to stress this dependence on $k$. To treat the remaining noise term, we again observe that
\begin{align*}
    \inner{\mathcal{P}_n\mathcal{P}\tilde{\mathcal{G}}_iu_s}{v_s}^2 & \leq 2\inner{\left(\mathcal{P}_n - I\right)\mathcal{P}\tilde{\mathcal{G}}_iu_s}{v_s}^2 + 2\inner{\mathcal{P}\tilde{\mathcal{G}}_iu_s}{v_s}^2\\
    &\leq c\norm{\xi_i}_{W^{2,\infty}}^2\norm{u_s}^2\norm{\left(\mathcal{P}_n - I\right)\left(w_s + \mathscr{B}_s\right)}_{H^1}^2 + 2\inner{\tilde{\mathcal{G}}_iu_s}{v_s}^2
\end{align*}
so
\begin{align}
   \nonumber &\tilde{\mathbbm{E}}\left(\int_0^{t^*} \sum_{i=1}^\infty \inner{\mathcal{P}_n\mathcal{P} \tilde{\mathcal{G}}_i u_s}{v_s}^2 ds\right)^{\frac{1}{2}}\\ & \qquad \leq c\tilde{\mathbbm{E}}\left(\int_0^{t^*} \norm{u_s}^2\norm{\left(\mathcal{P}_n - I\right)\left(w_s + \mathscr{B}_s\right)}_{H^1}^2 ds\right)^{\frac{1}{2}} + 2\tilde{\mathbbm{E}}\left(\int_0^{t^*} \sum_{i=1}^\infty \inner{\tilde{\mathcal{G}}_iu_s}{v_s}^2 ds\right)^{\frac{1}{2}} \label{indeedee}
\end{align}
and indeed
\begin{align*}
    &\tilde{\mathbbm{E}}\left(\int_0^{t^*} \norm{u_s}^2\norm{\left(\mathcal{P}_n - I\right)\left(w_s + \mathscr{B}_s\right)}_{H^1}^2 ds\right)^{\frac{1}{2}}\\ & \qquad \leq \left[\tilde{\mathbbm{E}}\left(\sup_{s \in [0,t^*]}\norm{u_s}^2\right) \right]^{\frac{1}{2}}\left(\int_0^{t^*} \norm{\left(\mathcal{P}_n - I\right)\left(w_s + \mathscr{B}_s\right)}_{H^1}^2 ds\right)^{\frac{1}{2}}\\ & \qquad \leq c_ko_n
\end{align*}
as seen just prior. For the second term in (\ref{indeedee}), we write
\begin{align*}
    \inner{\tilde{\mathcal{G}}_iu_s}{v_s}^2 \leq 2\inner{\tilde{\mathcal{G}}_iv_s}{v_s}^2  + 2\inner{\tilde{\mathcal{G}}_iw_s}{v_s}^2 + 2\inner{\tilde{\mathcal{G}}_i\mathscr{B}_s}{v_s}^2 
\end{align*}
and denote the corresponding terms as $\sum_{l=1}^3L_l$. In $L_1$ we apply (\ref{bigbound2NEW}) to see that
\begin{align*}
    L_1 \leq c\tilde{\mathbbm{E}}\left(\int_0^{t^*} \norm{v_s}^4 ds\right)^{\frac{1}{2}} &\leq c\tilde{\mathbbm{E}}\left(\sup_{t \in [0,t^*]}\norm{v_t}^2\int_0^{t^*} \norm{v_s}^2 ds\right)^{\frac{1}{2}}\\ &\leq \frac{1}{2}\tilde{\mathbbm{E}}\left( \sup_{t \in [0,t^*]}\norm{v_t}^2\right) + c\tilde{\mathbbm{E}}\left(\int_0^{t^*} \norm{v_s}^2 ds\right)
\end{align*}
where the first term can be absorbed into the first term on the left hand side of (\ref{new core energy ineq 2}). Next,
\begin{align*}
    L_2 &\leq c\tilde{\mathbbm{E}}\left(\int_0^{t^*}\sum_{i=1}^\infty\norm{\tilde{\xi_i}}_{W^{1,\infty}}^2\norm{w_s}_{H^1}^2 \norm{v_s}^2 ds\right)^{\frac{1}{2}} \leq c\tilde{\mathbbm{E}}\left((\nu_h + \nu_z)\int_0^{t^*}\norm{v_s}^2 ds\right)^{\frac{1}{2}}\\ &\leq c(\nu_h + \nu_z) + c\tilde{\mathbbm{E}}\left(\int_0^{t^*}\norm{v_s}^2 ds\right) \leq o_k + c\tilde{\mathbbm{E}}\left(\int_0^{t^*}\norm{v_s}^2 ds\right).
\end{align*}
For $L_3$ we split the term up into
$$\inner{\tilde{\mathcal{G}}_i\mathscr{B}_s}{v_s}^2 \leq 2\inner{\tilde{\mathcal{G}}_i^h\mathscr{B}_s}{v_s}^2 + 2\inner{\tilde{\mathcal{G}}_i^z\mathscr{B}_s}{v_s}^2.$$
Using the bounds on $\tilde{\mathcal{G}}_i^h\mathscr{B}$ and $\tilde{\mathcal{G}}_i^z\mathscr{B}$ that we have seen for example in $J^3_{6,7}$,
\begin{align*}
    \sum_{i=1}^\infty \inner{\tilde{\mathcal{G}}_i^h\mathscr{B}_s}{v_s}^2 \leq c\nu_h\norm{v_s}^2
\end{align*}
whilst
\begin{align*}
    \sum_{i=1}^\infty \inner{\tilde{\mathcal{G}}_i^z\mathscr{B}_s}{v_s}^2 \leq c\nu_z(\nu_h\nu_z)^{-\frac{1}{2}}\norm{v_s}^2 = c\left(\frac{\nu_z}{\nu_h}\right)^{\frac{1}{2}}\norm{v_s}^2.
\end{align*}
Proceeding identically to $L_2$ with these bounds in place, then
$$ L_3 \leq o_k + c\tilde{\mathbbm{E}}\left(\int_0^{t^*}\norm{v_s}^2 ds\right).$$
Substituting everything back into (\ref{new core energy ineq 2}), 
we achieve that
\begin{align} \label{final achievement}
    \tilde{\mathbbm{E}}\left(\sup_{t \in [0,t^*]}\norm{v_t}^2 + \nu_h\sum_{j=1}^2\int_0^{t^*}\norm{\partial_jv_s}^2ds + \nu_z \int_0^{t^*}\norm{\partial_3v_s}^2ds\right) \leq c\tilde{\mathbbm{E}}\left(\int_0^{t^*}\norm{v_s}^2ds\right) + o_k + c_ko_n. 
\end{align}
As we move towards concluding the proof, we revert back to the full notation used at its beginning. Applying Fubini's Theorem on the right hand side of (\ref{final achievement}), by the standard Gr\"{o}nwall Inequality we see that
\begin{equation} \label{end 1}
    \tilde{\mathbbm{E}}^k\left(\sup_{t \in [0,T]}\norm{\tilde{u}^{k,n}_t - w_t - \mathscr{B}^k_t}^2\right) \leq \left(o_k + c_ko_n\right)e^c = o_k + c_ko_n
\end{equation}
which can be inserted back into (\ref{final achievement}) to obtain
\begin{align}
    \tilde{\mathbbm{E}}^k\left( \nu_h^k\sum_{j=1}^2\int_0^{T}\norm{\partial_j\tilde{u}^{k,n}_s - \partial_jw_s - \partial_j\mathscr{B}^k_s}^2ds + \nu_z^k \int_0^{T}\norm{\partial_3\tilde{u}^{k,n}_s - \partial_3w_s - \partial_3\mathscr{B}^k_s}^2ds\right) \leq o_k + c_ko_n. \label{end 2}
\end{align}
Towards a convergence to $w$, note that
\begin{align*}
    \sup_{t \in [0,T]}\norm{\mathscr{B}^k_t} \leq \sup_{t \in [0,T]}\norm{\mathscr{A}_t}_{L^2_h}\norm{\mathscr{M}^k}_{L^2_z} \leq c\left(\nu_h^k\nu_z^k\right)^{\frac{1}{4}} = o_k
\end{align*}
which combines with (\ref{end 1}) to provide
\begin{equation} \label{end 3}
    \tilde{\mathbbm{E}}^k\left(\sup_{t \in [0,T]}\norm{\tilde{u}^{k,n}_t - w_t }^2\right) \leq o_k + c_ko_n.
\end{equation}
To prove the convergence we fix any small $0 < \varepsilon$ and must show that there exists a $K \in \N$ such that for all $K \leq k$, 
\begin{equation}
    \mathbbm{E}\left(\norm{u^{k} - w }_{L^\infty\left([0,T];L^2_{\sigma}\right)}^2\right) < \varepsilon.
\end{equation}
We fix $K$ large enough so that $o_K$ in (\ref{end 3}) is less than $\frac{\varepsilon}{2}$. Then for any fixed $K \leq k$, as described in the beginning of the proof,
\begin{align*}
    \mathbbm{E}\left(\norm{u^{k} - w }_{L^\infty\left([0,T];L^2_{\sigma}\right)}^2\right) &= \tilde{\mathbbm{E}}^k\left(\norm{\tilde{u}^{k} - w }_{L^\infty\left([0,T];L^2_{\sigma}\right)}^2\right)\\ &\leq \liminf_{n \rightarrow \infty} \tilde{\mathbbm{E}}^k\left(\norm{\tilde{u}^{k,n} - w}^2_{L^\infty\left([0,T];L^2_{\sigma} \right)}\right)\\
    &\leq \liminf_{n \rightarrow \infty}\left( \frac{\varepsilon}{2} + c_ko_n\right)\\ &< \varepsilon 
\end{align*}
which concludes the proof of the first assertion. In view of the second statement, keeping (\ref{end 2}) in mind, it is clear that
$$  \nu_h^k\sum_{j=1}^2\int_0^{T}\left(\norm{ \partial_jw_s}^2 + \norm{\partial_j\mathscr{B}^k_s}^2\right)ds + \nu_z^k \int_0^{T}\norm{\partial_3w_s}^2ds \leq o_k$$
whilst we are more detailed in the last term,
$$\nu_z^k \int_0^{T}\norm{\partial_3\mathscr{B}^k_s}^2ds \leq \nu_z^k\norm{\partial_3\mathscr{M}^k}^2_{L^2_z} \int_0^{T}\norm{\mathscr{A}_s}^2_{L^2_h}ds \leq c\nu_z^k\left(\nu_h^k\nu_z^k \right)^{-\frac{1}{2}} = c\left(\frac{\nu_z^k}{\nu_h^k} \right)^{\frac{1}{2}} = o_k.$$
Therefore by (\ref{end 2}),
\begin{align}
    \tilde{\mathbbm{E}}^k\left( \nu_h^k\sum_{j=1}^2\int_0^{T}\norm{\partial_j\tilde{u}^{k,n}_s }^2ds + \nu_z^k \int_0^{T}\norm{\partial_3\tilde{u}^{k,n}_s }^2ds\right) \leq o_k + c_ko_n \nonumber
\end{align}
and the proof concludes through the same final steps as the first assertion above.

\end{proof}

\textbf{Competing Interests:} The author reports that there are no competing interests to declare.\\

\textbf{Data Availability Statement:} There is no associated data.

\addcontentsline{toc}{section}{References} 
\bibliographystyle{newthing}
\bibliography{Biblio}

\end{document}